\newlength{\eX}
\newcommand{\orcid}[1]{\raisebox{-0.4ex}{\resizebox{!}{1.5\eX}{\href{https://orcid.org/#1}{\includegraphics{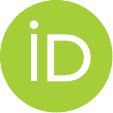}}}}}
\let\todon\todo
\renewcommand{\todo}[1]{\todon[inline,color=green!40]{\color{blue}{#1}}}
\theoremstyle{plain}
\newtheorem{theorem}{Theorem}[section]
\newtheorem{lemma}[theorem]{Lemma}
\newtheorem{proposition}[theorem]{Proposition}
\newtheorem{corollary}[theorem]{Corollary}
\newtheorem{remark}[theorem]{Remark}
\theoremstyle{definition}
\newtheorem{definition}[theorem]{Definition}
\newtheorem{example}[theorem]{Example}
\newcommand\letter[1]{\ensuremath{\textcolor{cyan}{\mathtt{#1}}}}
\DeclareMathOperator{\area}{\mathtt{area}}
\def\Hqsh{\ensuremath{H_{\mathrm{qsh}}}}
\newcommand*{\R}{{\mathbb R}}
\newcommand\N{\mathbb N}
\def\e{\mathbf e}
\DeclarePairedDelimiter{\dbra}{[\mkern-3.5mu[}{]\mkern-3.5mu]}
\DeclareMathOperator{\DS}{S}
\DeclareMathOperator{\ISS}{ISS}
\def\qsh{\mathrm{qsh}}
\newcommand*\linenomathpatchAMS[1]{%
    \expandafter\pretocmd\csname #1\endcsname {\linenomathAMS}{}{}%
    \expandafter\pretocmd\csname #1*\endcsname{\linenomathAMS}{}{}%
    \expandafter\apptocmd\csname end#1\endcsname {\endlinenomath}{}{}%
    \expandafter\apptocmd\csname end#1*\endcsname{\endlinenomath}{}{}%
}
    \let\linenomathAMS\linenomathWithnumbers
    \patchcmd\linenomathAMS{\advance\postdisplaypenalty\linenopenalty}{}{}{}
    \let\linenomathAMS\linenomathNonumbers
\begin{document}

\title[Generalized iterated-sums signatures]{Generalized iterated-sums signatures}

\author[J. Diehl]{Joscha Diehl \orcid{0000-0002-8281-7203}}
\address{Universit\"at Greifswald\\ Institut f\"ur Mathematik und Informatik\\ Walther-Rathenau-Str.~47\\ 17489 Greifswald, Germany.}
\email{joscha.diehl@uni-greifswald.de}
\urladdr{https://diehlj.github.io}
\author[K. Ebrahimi-Fard]{Kurusch Ebrahimi-Fard \orcid{0000-0001-5180-5835}}
\address{Norwegian University of Science and Technology -- NTNU\\ Department of Mathematical Sciences\\ 7491 Trondheim, Norway.}
\email{kurusch.ebrahimi-fard@ntnu.no}
\urladdr{https://folk.ntnu.no/kurusche}
\author[N. Tapia]{Nikolas Tapia \orcid{0000-0003-0018-2492}}
\address{Weierstrass Institute\\ Mohrenstr.~39\\ 10117 Berlin, Germany\\ and\\Technische Universität Berlin\\ Str.~des 17.~Juni 136\\ 10623 Berlin, Germany.}
\email{tapia@wias-berlin.de}
\urladdr{https://wias-berlin.de/people/tapia}

\keywords{Time series analysis, time warping, signature,    quasi-shuffle product, Hoffman's exponential, Hopf algebra}

\date{\today}

\begin{abstract}
	We explore the algebraic properties of a generalized version of the iterated-sums signature,
    inspired by previous work of F.~Király and H.~Oberhauser.
    In particular, we show how to recover the character property of the associated linear map on
    the tensor algebra by considering a deformed quasi-shuffle product of words.
    We introduce three non-linear transformations on iterated-sums signatures, close in spirit to Machine
    Learning applications, and show some of their properties.
\end{abstract}

\maketitle


\tableofcontents

\section{Introduction}
\label{sec:intro}

Recently, a series of papers \cite{DET2020,DET2020b,KiralyOberhauser2019,LyonsOberhauser2017} have highlighted the importance of \emph{signature-like objects}
(following T.~Lyons' nomenclature) for capturing features of sequentially ordered data.
Part of the reason for their success is that these transformations posses a universality property,
meaning that they are able to approximate arbitrary\footnote{See, for example, \cite[Proposition A.6]{kidger2019deep} for a precise statement.} nonlinear mappings on sequence space
by linear functionals on feature space.
They can also be efficiently computed thanks to an inherent recursive structure.

Both properties can be succinctly described by using Hopf-algebraic language, which has by now
become standard in the field. 
For the so-called iterated-integrals signature, the underlying Hopf algebra is the space of words together with the commutative \emph{shuffle product} \cite{Ree1958,Reutenauer1993} and the noncocommutative deconcatenation coproduct.
On the other hand, replacing integrals by sums, we obtain the iterated-sums signature, which is defined over the commutative \emph{quasi-shuffle algebra} on words \cite{Cartier1972,Gaines1994,Hoffman2000,NewRad1979}. Equipped with the aforementioned deconcatenation coproduct, the latter becomes a Hopf algebra.
Shuffle and quasi-shuffle products algebraically encode integration and summation by parts for iterated integrals and sums respectively. 
In both cases the properties mentioned in the preceding paragraph amount to saying that the corresponding signature-like maps are
characters, i.e., algebra morphisms, and that they satisfy Chen's relation.

Recently, F.~Király and H.~Oberhauser \cite{KiralyOberhauser2019} introduced a higher order version
of the iterated-sums signature as a way of approximating the iterated-integrals signature.
The main disadvantage of this generalization is that the character property
is lost and consequently the universality property ceases to hold.
In the paper at hand we unfold the algebraic underpinning of the definition of Király and Oberhauser's  higher order iterated-sums signature, which permits further generalization to arbitrary nonlinearities, as opposed to the more restricted class of exponential-type nonlinearities underlying previous approaches.
Moreover, we show that the generalized  iterated-sums signature enjoys a character property with respect to a different Hopf algebra defined on words in terms of a modified quasi-shuffle product and the deconcatenation coproduct; in fact, we show that the algebraic structure actually depends on the selected nonlinear transformation.
We come back to this topic in \Cref{sss:KO}.

Thanks to the more general approach, we are able to introduce three new transformations of the
iterated-sums signature. The first transformation is obtained by applying a tensorized nonlinear transformation
to each time slice, the second one is constructed by applying a polynomial map to increments,
whereas the third is obtained by first transforming the data and then considering its increments. We
show that these transformations can be expressed in terms of the un-transformed iterated-sums
signature. In the third case, this rewriting procedure generalizes earlier work by L.~Colmenarejo and R.~Preiß on iterated-integrals signatures defined with respect to paths transformed by polynomial maps \cite{CP2020}.

\smallskip

The rest of the paper is organized as follows. In \Cref{sec:quasi-shuffle}, we review the algebraic
foundation of our construction, i.e., the notion of quasi-shuffle Hopf algebra.
In \Cref{sec:pouterVarying}, we introduce the generalized iterated-sums signature map and provide a
complete description of its most important properties using the developments of the previous
section.

\medskip

{\textbf{Acknowledgments}}: We thank the referee for pertinent remarks and observations improving our understanding of certain algebraic aspects, which ameliorated the presentation.
We also thank Rosa Preiss for helpful comments. 
The second author is supported by the Research Council of Norway through project 302831 ``Computational Dynamics and Stochastics on Manifolds'' (CODYSMA).
The third author is funded by the Deutsche Forschungsgemeinschaft (DFG, German Research Foundation) under Germany's Excellence Strategy – The Berlin Mathematics Research Center MATH+ (EXC-2046/1, project ID: 390685689)


\section{Quasi-shuffle Hopf algebra}
\label{sec:quasi-shuffle}

In this section we briefly recall the notion of commutative quasi-shuffle product, the algebraic framework present in \cite{DET2020}. However, we shall emphasise the refined viewpoint based on the notion of half-shuffle product. Readers interested in the details are directed to further references
\cite{EFP2020,Hoffman2000,FP2020,FPT2016,HofIha2017}.

Following Foissy and Patras \cite{FP2020} we define the notion of commutative quasi-shuffle algebra over the base field $\mathbb{R}$.

\begin{definition}
    A commutative quasi-shuffle algebra $(\mathcal A,{\succ},{\bullet})$ consists of a nonunital commutative $\mathbb{R}$-algebra
    $(\mathcal A,{\bullet})$ equipped with a linear \emph{half-shuffle product} ${\succ}\colon \mathcal A \otimes \mathcal A \to \mathcal A$ satisfying
\begin{align}
    x \succ (y \succ z) &= (x * y) \succ z, \label{Qsh1}\\
    (x \succ y) \bullet z &= x \succ (y \bullet z), \label{Qsh2}\\
    (x \bullet y)\bullet z &= x\bullet (y\bullet z),\label{Qsh3}
\end{align}
where the {\it{quasi-shuffle product}} ${\ast}\colon \mathcal A \otimes \mathcal A \to \mathcal A$ is defined as
\begin{equation}
\label{qshproduct}
x * y \coloneq x \succ y + y \succ x + x \bullet y.
\end{equation}
\label{def:comQSh}
\end{definition}

One verifies that relations \eqref{Qsh1}, \eqref{Qsh2} and \eqref{Qsh3} imply that the quasi-shuffle product \eqref{qshproduct} is both commutative and associative. Observe that one may characterise a commutative quasi-shuffle algebra as a space with two commutative products related through a --symmetrized-- half-shuffle product. In the following, quasi-shuffle algebra means commutative quasi-shuffle algebra.

\begin{definition}
A quasi-shuffle morphism between two quasi-shuffle algebras, \((\mathcal A,{\succ},{\bullet})\) and \((\tilde{\mathcal A},{\tilde{\succ}},{\tilde{\bullet}})\), is a linear map \(\Lambda\colon \mathcal A\to\tilde{\mathcal A}\) satisfying
	\[
		\Lambda(x\succ y)=\Lambda(x)\mathbin{\tilde{\succ}}\Lambda(y),\quad\Lambda(x\bullet
		y)=\Lambda(x)\mathbin{\tilde{\bullet}}\Lambda(y),
	\]
	for all \(x,y\in \mathcal A\).
	\label{def:qshmorph}
\end{definition}

Any quasi-shuffle morphism is an algebra morphism between quasi-shuffle algebras, that is, $\Lambda (x *
y) = \Lambda(x)\mathbin{\tilde{*}} \Lambda(y)$.
A quasi-shuffle algebra $(\mathcal A,\succ,\bullet)$ has a unital extension.
Indeed, set $\bar{\mathcal A}\coloneq\R1\oplus \mathcal A$ and define for $a \in \mathcal A$: $1
\bullet a = a \bullet 1\coloneq 0$, $1 \succ a \coloneq a$ and $a \succ 1\coloneq 0$.
Note that the product $1 \succ 1$ as well as $1 \bullet 1$ are excluded and so the identity $1 \ast 1 \coloneq 1$ must be imposed in addition. This turns $\bar{\mathcal A}$ into a unital algebra with quasi-shuffle product
\(\ast\).
Furthermore, any quasi-shuffle morphism can be extended as a unital algebra morphism between the corresponding unitizations; in particular, this
extension will preserve the units.

If the commutative algebra $(\mathcal A,\bullet)$ in \Cref{def:comQSh} has a trivial product, i.e., \(x\bullet y=0\) for all \(x,y\in \mathcal A\), then the notion of commutative quasi-shuffle algebra reduces to that of commutative shuffle algebra, which is defined solely in terms of relation \eqref{Qsh1}. In this case the commutative and associative product \eqref{qshproduct} is called shuffle product.

\begin{remark}
We note that different terminologies are present in the literature. Commutative shuffle and quasi-shuffle algebras are also known as Zinbiel and commutative tridendriform algebras, respectively. The noncommutative generalizations of both shuffle and quasi-shuffle algebra, are also known as dendriform and tridendriform algebras, respectively.
We follow  {\rm{\cite{FP2020}}}, where the preference for the terminology used in this work is explained. 
\end{remark}

Our main example provides also the paradigm of commutative quasi-shuffle algebra, i.e., the free commutative quasi-shuffle algebra.  
Let \(A=\{\letter 1,\dotsc,\letter d\}\) be a finite alphabet and consider the reduced symmetric algebra
\(S(A)\) over the vector space spanned by it.
By way of explanation, \(S(A)\) is the space spanned by words in commuting letters from \(A\); here
\emph{reduced} means that we do not suppose \(S(A)\) to have a unit, i.e.,~we consider only the
augmentation ideal of the standard symmetric algebra over \(A\). Keeping up with our previous convention \cite{DET2020}, we denote the commutative product in \(S(A)\) by square brackets.
Neither endow we \(S(A)\) with any additional algebraic structure other than its product.
Finally we recall that \(S(A)\) has a natural grading
\[ 
	S(A)=\bigoplus_{n=1}^\infty S^nA, 
\]
where \( S^nA \) is spanned by products of the form \([\letter i_1\dotsm\letter i_n]\) with
\(\letter i_1,\dotsc,\letter i_n\in A\).
We denote this basis by \(\mathfrak A_n\).
It is well known that \(\dim S^nA=\binom{d+n-1}{n}\).
Furthermore, the set
\[
        \mathfrak A=\bigcup_{n=1}^\infty\mathfrak A_n
\]
constitutes a basis for \(S(A)\).

Now, we let \(H\coloneq T(S(A))\) be the unital tensor algebra over \(S(A)\).
As a vector space, it is the direct sum
\[
	H=\bigoplus_{n=0}^\infty S(A)^{\otimes n}=\bigoplus_{n=0}^\infty H_n, 
\]
where \(H_0=\R\e\) and
\[
	H_n = \bigoplus_{k=1}^n\left( \bigoplus_{i_1+\dotsb+i_k=n}S^{i_1}A\otimes\dotsm\otimes
        S^{i_k}A \right).
\]
We also set
\[
    H^+=\bigoplus_{n=1}^\infty H_n.
\]

In the following we will use the word notation for elements in $H$.
For example, when \(d=2\), a generic element of \(H\) might look like
\[
    \sqrt{3}\,[\letter{1}][\letter{2}]+\frac{\pi^2}{6}[\letter{12}]+2\,[\letter1][\letter2][\letter1].
\]

Concatenation, written by juxtaposition of symbols, is the standard product on $H$. In particular, \(H\) inherits a grading from \(S(A)\) in this way, which we call the \emph{weight} and denote by \(|\cdot|\).
The length of a word $w=s_1 \cdots s_k \in H$ is defined to be \(\ell(w)=k\).
In \cite{DET2020} we show that
\[
    \sum_{n=0}^\infty t^n\dim H_n=\frac{(1-t)^d}{2(1-t)^d-1}=1+dt+\frac{d(3d+1)}{2}t^2+\frac{d(13d^2+9d+2)}{6}t^3+\dotsb.
\]
It can also be shown that the dimensions satisfy the following recursion:
\[
    \dim H_n=\sum_{j=1}^{n}\binom{d+j-1}{d-1}\dim H_{n-j},\quad\dim H_0=1.
\]

The standard basis for \(H\) is the set of words over \(\mathfrak A\), here denoted by \(\mathfrak
A^*\).
We endow \(H\) with an inductively defined product obtained from the bracket product of \(S(A)\): \(\e\star u\coloneq u\eqcolon
u\star\e\) and
\begin{equation}
\label{qshufprod}
ua\star vb
=(u\star vb)a
+(ua\star v)b
+(u\star v)[ab]
\end{equation}
for \(u,v\in H\) and \(a,b\in S(A)\).
Hoffman \cite{Hoffman2000} called \eqref{qshufprod} quasi-shuffle product and showed that it is commutative and associative as well as compatible with the deconcatenation coproduct \(\Delta_{\scriptscriptstyle{\text{dec}}}\colon H\to H\otimes H\), defined on basis elements \(u=u_1\cdots u_n\in\mathfrak A^*\) by
\[
	\Delta_{\scriptscriptstyle{\text{dec}}} u=u\otimes\mathbf e+\mathbf e\otimes u+\sum_{j=1}^{n-1}u_1\dotsm u_{j}\otimes u_{j+1}\dotsm u_n,
\]
and the counit \(\varepsilon\) determined by the grading, so that
\(H_{\qsh}\coloneq(H,\star,\Delta_{\scriptscriptstyle{\text{dec}}},\e,\varepsilon,\alpha)\) is a Hopf algebra. See also \cite{NewRad1979} and \cite{Gaines1994}.

The non-unital quasi-shuffle algebra $\Hqsh^+$ carries a commutative quasi-shuffle structure \cite{L2007}, defined recursively
by
\begin{equation}
    u\succ va\coloneqq (u\star v)a,
    \quad
    ua\bullet vb=(u\star v)[ab].
    \label{eq:qshrec}
\end{equation}
It is such that the unitization of \(\Hqsh^+\) is isomorphic to \(\Hqsh\) as a unital commutative algebra. Observe that any word $w=s_1 \cdots s_k \in H$ can be written using the half-shuffle product defined in \eqref{eq:qshrec}
\begin{equation*}
    w = (\cdots ((s_1 \succ s_2)\succ s_3) \cdots )\succ s_k.
\end{equation*}

\begin{remark}
It is natural to consider the relation between the deconcatenation coproduct $\Delta_{\scriptscriptstyle{\text{dec}}}$ and half-shuffle as well as the
$\bullet$ products. It turns out that they form what is known as quasi-shuffle bialgebra. See reference {\rm{\cite{FP2020}}} for more details.
\end{remark}

Finally, we recall the following important result due to Loday \cite[Theorem 2.5]{L2007}.

\begin{theorem}
    The free commutative unital quasi-shuffle algebra over \(\R^d\) is isomorphic to \(\Hqsh\).
    \label{thm:loday}
\end{theorem}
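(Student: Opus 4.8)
The plan is to verify directly the universal property that characterises the free commutative (non-unital) quasi-shuffle algebra and then pass to the unitisation. Concretely, writing $V=\mathrm{span}(A)\cong\R^d$, I would show that $\Hqsh^+$, equipped with the products $\succ$ and $\bullet$ of \eqref{eq:qshrec} and with the canonical inclusion $\iota\colon V\hookrightarrow S^1A\subset S(A)=H_1$, is universal among pairs $(\mathcal B,\phi)$ consisting of a commutative quasi-shuffle algebra $(\mathcal B,{\succ_{\mathcal B}},{\bullet_{\mathcal B}})$ and a linear map $\phi\colon V\to\mathcal B$: that is, $\phi$ extends to a \emph{unique} quasi-shuffle morphism $\Phi\colon\Hqsh^+\to\mathcal B$ in the sense of \Cref{def:qshmorph}. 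The unital statement of \Cref{thm:loday} then follows from the non-unital one by applying the unitisation described after \Cref{def:qshmorph}, since the free unital object is the unitisation of the free non-unital one.

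To build $\Phi$ I would proceed in two stages, mirroring the two-step construction $H=T(S(A))$. First, because $(\mathcal B,{\bullet_{\mathcal B}})$ is a commutative associative non-unital algebra by \eqref{Qsh3}, and since $S(A)$ is the free such algebra over $V$, the map $\phi$ extends uniquely to an algebra morphism $\Phi_1\colon(S(A),[\cdot])\to(\mathcal B,{\bullet_{\mathcal B}})$, namely $\Phi_1([a_1\cdots a_n])=\phi(a_1)\bullet_{\mathcal B}\cdots\bullet_{\mathcal B}\phi(a_n)$. Second, using the observation recorded after \eqref{eq:qshrec} that every word $w=s_1\cdots s_k$ satisfies $w=(\cdots(s_1\succ s_2)\succ\cdots)\succ s_k$, I would define $\Phi$ on the basis $\mathfrak A^*$ by the left-bracketed recursion $\Phi(ws)=\Phi(w)\succ_{\mathcal B}\Phi_1(s)$ for $s\in S(A)$, with $\Phi|_{S(A)}=\Phi_1$. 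Since concatenation by a single letter coincides with $w\succ s$ (take $v=\e$ in \eqref{eq:qshrec}), these values are already forced by the morphism property, so this stage simultaneously furnishes the candidate $\Phi$ and its uniqueness, once $\Phi$ is shown to be a morphism.

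The core of the argument is then a single induction on the total weight $|x|+|y|$ proving simultaneously that $\Phi(x\succ y)=\Phi(x)\succ_{\mathcal B}\Phi(y)$ and $\Phi(x\bullet y)=\Phi(x)\bullet_{\mathcal B}\Phi(y)$ (whence $\ast$-compatibility follows from \eqref{qshproduct} and commutativity). For the half-shuffle case I would write $y=y'b$ with $b\in S(A)$ and use \eqref{eq:qshrec} to get $x\succ y=(x\star y')\succ b$; applying the definition of $\Phi$, the inductive hypothesis in the form $\Phi(x\star y')=\Phi(x)\ast_{\mathcal B}\Phi(y')$, and relation \eqref{Qsh1} in $\mathcal B$, both $\Phi(x\succ y)$ and $\Phi(x)\succ_{\mathcal B}\Phi(y)$ reduce to $(\Phi(x)\ast_{\mathcal B}\Phi(y'))\succ_{\mathcal B}\Phi_1(b)$. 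For the $\bullet$ case I would write $x=x'a$, $y=y'b$, use \eqref{eq:qshrec} to get $x\bullet y=(x'\star y')\succ[ab]$, and reduce to the identity
\[
(X\succ_{\mathcal B}\alpha)\bullet_{\mathcal B}(Y\succ_{\mathcal B}\beta)=(X\ast_{\mathcal B}Y)\succ_{\mathcal B}(\alpha\bullet_{\mathcal B}\beta),
\]
valid in any commutative quasi-shuffle algebra, which I would establish as a short auxiliary lemma from \eqref{Qsh2} (to pull $\bullet_{\mathcal B}$ inside successive applications of $\succ_{\mathcal B}$), commutativity of $\bullet_{\mathcal B}$, and \eqref{Qsh1}.

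I expect the main obstacle to be bookkeeping rather than conceptual. The inductions on $\succ$ and $\bullet$ are genuinely coupled through the $\ast$-product, so one must order the cases so that every invocation of the inductive hypothesis occurs at strictly smaller weight (it does, since $|x'|,|y'|\ge 1$), and one must separately treat the boundary cases in which $x'$ or $y'$ equals $\e$, where the conventions $\e\succ s=s$ and $\e\bullet s=0$ come into play. Granting the auxiliary identity and this edge-case check, uniqueness holds because any quasi-shuffle morphism extending $\phi$ is determined on $S(A)$ by $\bullet$-compatibility and on all of $\mathfrak A^*$ by $\succ$-compatibility together with the left-bracketing; hence it must coincide with $\Phi$. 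This establishes the universal property, and therefore the isomorphism with $\Hqsh$, recovering \cite[Theorem 2.5]{L2007}.
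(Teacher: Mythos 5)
The paper does not actually prove \Cref{thm:loday}: it is recalled verbatim from Loday and justified only by the citation \cite[Theorem 2.5]{L2007}. So there is no in-paper argument to compare against, and your proposal should be judged as a self-contained proof. On that basis it is essentially correct. The strategy --- verify the universal property for the non-unital object $\Hqsh^+$ and pass to the unitisation --- is the natural one, and the two-stage construction is forced exactly as you say: $\bullet$-compatibility pins down $\Phi$ on $S(A)$ because $(S(A),[\cdot])$ is the free commutative non-unital algebra on $V$ and $(\mathcal B,\bullet_{\mathcal B})$ is commutative associative by \eqref{Qsh3}; $\succ$-compatibility together with $w s = w\succ s$ (the $v=\e$ case of \eqref{eq:qshrec}) pins it down on all of $\mathfrak A^*$. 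The coupled induction on $|x|+|y|$ is well-founded since $|x'|,|y'|$ drop by at least one, and your auxiliary identity
\[
(X\succ_{\mathcal B}\alpha)\bullet_{\mathcal B}(Y\succ_{\mathcal B}\beta)=(X\ast_{\mathcal B}Y)\succ_{\mathcal B}(\alpha\bullet_{\mathcal B}\beta)
\]
does hold: apply \eqref{Qsh2} to pull $X$ out, use commutativity of $\bullet_{\mathcal B}$ and \eqref{Qsh2} again to pull $Y$ out, then collapse the nested $\succ_{\mathcal B}$ with \eqref{Qsh1}. Two points deserve explicit care in a written-out version: first, the inductive hypothesis is invoked for the full product $\ast$ at lower weight, so the statement being proved by induction must be the conjunction of the $\succ$ and $\bullet$ morphism identities (as you note); second, when $y'=\e$ the expression $\Phi(x\star y')$ is not literally covered by the induction since $\Phi$ lives on $H^+$, so that boundary case must be dispatched directly from the definition $\Phi(xb)=\Phi(x)\succ_{\mathcal B}\Phi_1(b)$ rather than by citing the hypothesis. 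With those caveats, which you already flag, the argument goes through and genuinely reproves Loday's theorem rather than merely citing it.
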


The (algebraic) dual space \(\Hqsh'\) can be identified with formal word series 
\[
        \mathrm T=\sum_{w\in\mathfrak A^*}\langle\mathrm T,w\rangle w,
\]
in the sense that there is an isomorphism between such formal series and elements of \(H'\).
The convolution product of two maps \(\mathrm R,\mathrm T\in H'\) is defined by
\[
        \mathrm R\mathrm T\coloneq\sum_{w\in\mathfrak A^*} \langle\mathrm R\otimes\mathrm T,\Delta_{\scriptscriptstyle{\text{dec}}} w\rangle w=\sum_{w\in\mathfrak A^*}\left(
        \sum_{uv=w}\langle\mathrm R,u\rangle\langle\mathrm T,v\rangle \right)w.
\]
Observe that this product is associative but not commutative.


\subsection{Deformed quasi-shuffle products}
\label{sse:twist}

In this section, we describe how coalgebra automorphisms of the coalgebra \((H,\Delta_{\scriptscriptstyle{\text{dec}}})\) can be used to define new quasi-shuffle structures on \(H\) by
deforming Hoffman's quasi-shuffle product.
We recall that a coalgebra morphism between two coalgebras \((A,\Delta_A,\varepsilon_A)\) and \((B,\Delta_B,\varepsilon_B)\) is a linear map
\(\Psi\colon A\to B\) such that \(\Delta_B\circ\Psi=(\Psi\otimes\Psi)\circ\Delta_A\) and \(\varepsilon_B\circ\Psi=\varepsilon_A\).

\begin{remark}
    \label{rem:coalgebraEndomorphisms}
	All possible coalgebra endomorphisms of \((H,\Delta_{\scriptscriptstyle{\text{dec}}})\) have been characterized by Foissy, Patras and Thibon
   {\rm{\cite[Corollary 2.1]{FPT2016}}}.
	In our context, the result is that this class is isomorphic to the class of linear maps from \(H^+\) to \(S(A)\).
	The isomorphism is described in the following way: any linear map \(\zeta\colon H^+\to S(A)\) induces a unique
    coalgebra endomorphism \(\varphi_\zeta\) of
	\((H,\Delta_{\scriptscriptstyle{\text{dec}}})\) via
	\[
		\varphi_\zeta(a_1\dotsm a_n)=\sum_{k=1}^n\sum_{i_1+\dotsb+i_k=n}\zeta(a_1\dotsm a_{i_1})\dotsm \zeta(a_{i_1+\dotsb+i_{k-1}+1}\dotsm a_n).
	\]

	Conversely, any coalgebra endomorphism \(\varphi\colon(H,\Delta_{\scriptscriptstyle{\text{dec}}}) \to (H,\Delta_{\scriptscriptstyle{\text{dec}}})\) is uniquely determined by the
    knowledge of \(\zeta=\pi\circ\varphi\), where
	\(\pi\) denotes the projection of \(H\) onto \(S(A)\), which is orthogonal to \(\R\) and \(S(A)^{\otimes k}\) for \(k\ge 2\).
	In general, the result remains valid if \(S(A)\) is replaced by any commutative algebra.
	Moreover, in {\rm{\cite{FPT2016}}} it is also shown that \(\varphi\) is an automorphism if and only if the restriction of \(\zeta=\pi\circ\varphi\) to \(S(A)\) is a linear isomorphism.
\end{remark}

From now on, \(\Psi\) will denote a coalgebra automorphism of \((H, \Delta_{\scriptscriptstyle{\text{dec}}})\).
The following three assertions can be derived straightforwardly by transporting all algebraic structure through the mapping $\Psi$.
\begin{proposition}
\label{prop:deformedqSh}
\index[general]{succPsi@$\succ_\Psi$}
\index[general]{bulletPsi@$\bullet_\Psi$}
The space $H^+$ equipped with the deformed products
$$
        u \succ_\Psi v \coloneq\Psi^{-1}(\Psi(u)\succ \Psi(v))
        \qquad
        u \bullet_\Psi v \coloneq\Psi^{-1}(\Psi(u)\bullet \Psi(v))
$$
is a commutative quasi-shuffle algebra. The deformed quasi-shuffle product
\index[general]{starpsi@$\star_\Psi$, deformed product}
\begin{equation}
\label{twqshuf}
    u\star_\Psi v\coloneq\Psi^{-1}(\Psi(u)\star\Psi(v)).
\end{equation}
is associative and commutative.
\end{proposition}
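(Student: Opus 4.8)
The plan is to treat the statement as a \emph{transport of structure} along the linear isomorphism $\Psi$. First I would check that the deformed products are well-defined as maps $H^+\otimes H^+\to H^+$. Since $\Psi$ is a coalgebra automorphism of $(H,\Delta_{\scriptscriptstyle{\text{dec}}})$ it preserves the counit, $\varepsilon\circ\Psi=\varepsilon$, and because $H^+=\ker\varepsilon$ this forces $\Psi(H^+)\subseteq H^+$; applying the same reasoning to $\Psi^{-1}$ gives $\Psi(H^+)=H^+$, so $\Psi$ restricts to a linear automorphism of $H^+$. Consequently, for $u,v\in H^+$ the elements $\Psi(u)\succ\Psi(v)$ and $\Psi(u)\bullet\Psi(v)$ lie in $H^+$ and their images under $\Psi^{-1}$ do too, so $\succ_\Psi$ and $\bullet_\Psi$ are genuine binary operations on $H^+$.

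The key structural observation is that, by the very definition of the deformed products, $\Psi$ intertwines them with Hoffman's original ones, $\Psi(u\succ_\Psi v)=\Psi(u)\succ\Psi(v)$ and $\Psi(u\bullet_\Psi v)=\Psi(u)\bullet\Psi(v)$, and by linearity of $\Psi^{-1}$ the same holds for the full product, $\Psi(u\star_\Psi v)=\Psi(u)\star\Psi(v)$. In other words $\Psi$ is, tautologically, a quasi-shuffle morphism from $(H^+,\succ_\Psi,\bullet_\Psi)$ to $(\Hqsh^+,\succ,\bullet)$ in the sense of \Cref{def:qshmorph}. Every relation in \Cref{def:comQSh} is an identity in $\succ$ and $\bullet$, so it suffices to push each identity through $\Psi$, use the intertwining property to replace deformed products by undeformed ones, invoke the corresponding relation \eqref{Qsh1}, \eqref{Qsh2} or \eqref{Qsh3} already known to hold on $\Hqsh^+$, and finally apply $\Psi^{-1}$. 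For instance, for \eqref{Qsh1} one computes
\begin{align*}
    \Psi\bigl(x\succ_\Psi(y\succ_\Psi z)\bigr)
    &=\Psi(x)\succ\bigl(\Psi(y)\succ\Psi(z)\bigr)
    =\bigl(\Psi(x)\star\Psi(y)\bigr)\succ\Psi(z)\\
    &=\Psi\bigl((x\star_\Psi y)\succ_\Psi z\bigr),
\end{align*}
and applying $\Psi^{-1}$ yields $x\succ_\Psi(y\succ_\Psi z)=(x\star_\Psi y)\succ_\Psi z$. The associativity relation \eqref{Qsh3} for $\bullet_\Psi$ and the compatibility \eqref{Qsh2} are handled identically, and commutativity of $\bullet_\Psi$ is inherited from that of $\bullet$ in the same way. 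This shows that $(H^+,\succ_\Psi,\bullet_\Psi)$ is a commutative quasi-shuffle algebra.

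It remains to identify $\star_\Psi$ with the symmetrized half-shuffle of the deformed structure. Applying $\Psi^{-1}$ to the defining relation \eqref{qshproduct} written for $\Psi(u)$ and $\Psi(v)$, and using linearity, gives directly
\[
    u\star_\Psi v=u\succ_\Psi v+v\succ_\Psi u+u\bullet_\Psi v,
\]
so $\star_\Psi$ is exactly the quasi-shuffle product attached to $(\succ_\Psi,\bullet_\Psi)$. Its commutativity and associativity then follow from the general implication recorded right after \Cref{def:comQSh}, or equivalently by transporting the commutativity and associativity of $\star$ through $\Psi$ as above. I expect the only genuine subtlety to be the well-definedness step: one must not take for granted that $\Psi$ preserves $H^+$, since the half-shuffle and $\bullet$ products are defined only on the augmentation ideal and make no sense on the unit $\e$. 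Everything else is a formal consequence of $\Psi$ being a linear bijection intertwining the two structures.
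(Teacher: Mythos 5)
Your proposal is correct and follows essentially the same route as the paper: transport each half-shuffle axiom through $\Psi$, invoke the corresponding identity in $\Hqsh^+$, and pull back with $\Psi^{-1}$. The only addition is your explicit check that $\Psi$ restricts to an automorphism of $H^+$ (via $\varepsilon\circ\Psi=\varepsilon$), which the paper leaves implicit but which is a worthwhile remark.
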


\begin{proposition}
The space \(H_\Psi\coloneq(H,\star_\Psi,\Delta_{\scriptscriptstyle{\text{dec}}},\Psi^{-1}(\e),\varepsilon)\) is a connected graded Hopf algebra.
\end{proposition}

\begin{proposition}
    \label{prop:psif}
    The map \(\Psi\colon H_\Psi\to H_{\qsh}\) is a Hopf algebra isomorphism.
\end{proposition}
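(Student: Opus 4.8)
The plan is to verify directly that $\Psi$ carries every structure map of $H_\Psi$ to the corresponding one of $H_{\qsh}$, and then to appeal to the standard fact that a bijective bialgebra morphism between two Hopf algebras automatically intertwines their antipodes, so that no separate antipode computation is required.

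First I would record bijectivity: since $\Psi$ is, by assumption, a coalgebra automorphism of $(H,\Delta_{\scriptscriptstyle{\text{dec}}})$, it is in particular a linear isomorphism, whose inverse $\Psi^{-1}$ is exactly the map already appearing in the definition of $\star_\Psi$. Next, the algebra-morphism property is immediate from the very definition of the deformed product: applying $\Psi$ to $u\star_\Psi v=\Psi^{-1}(\Psi(u)\star\Psi(v))$ yields $\Psi(u\star_\Psi v)=\Psi(u)\star\Psi(v)$, and the unit is preserved since $\Psi(\Psi^{-1}(\e))=\e$. On the coalgebra side there is nothing new to prove: the identities $\Delta_{\scriptscriptstyle{\text{dec}}}\circ\Psi=(\Psi\otimes\Psi)\circ\Delta_{\scriptscriptstyle{\text{dec}}}$ and $\varepsilon\circ\Psi=\varepsilon$ are precisely the conditions defining a coalgebra automorphism of $(H,\Delta_{\scriptscriptstyle{\text{dec}}})$, and the counit is literally the same on both sides. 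Assembling these observations shows that $\Psi$ is a bijective bialgebra morphism from $H_\Psi$ to $H_{\qsh}$.

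Finally I would invoke that both $H_\Psi$ and $H_{\qsh}$ are connected graded Hopf algebras (the former by the theorem just established, the latter by Hoffman's construction recalled earlier), together with the fact that a bialgebra isomorphism between Hopf algebras necessarily commutes with the antipodes. This holds because the antipode is the convolution inverse of the identity endomorphism and such inverses are unique, so any bialgebra morphism must send antipode to antipode. Combining this with the bialgebra isomorphism established above gives that $\Psi$ is a Hopf algebra isomorphism.

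I do not expect a genuine obstacle here, as every ingredient has either already been proved in the two preceding results or is forced immediately by the definition of $\star_\Psi$. The only point meriting care is to \emph{not} attempt to re-derive the antipode compatibility by hand, but instead to recognise it as an automatic consequence of being a bialgebra morphism between Hopf algebras.
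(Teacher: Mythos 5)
Your proposal is correct and follows essentially the same route as the paper: the coalgebra compatibility is immediate from $\Psi$ being a coalgebra automorphism, the algebra compatibility is read off from the definition of $\star_\Psi$, and the antipode compatibility is invoked as the standard fact that a bialgebra morphism between Hopf algebras automatically intertwines antipodes. Your proposal merely spells out the convolution-inverse argument behind that last "general fact," which the paper leaves implicit.
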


\bigskip

\begin{remark}~
    \begin{enumerate}[leftmargin=1.8em]
        \item It is well known that when \(\Psi\) is Hoffman's exponential, the deformed quasi-shuffle product \(\star_\Psi\)
            coincides with the classical shuffle product on \(H\).
            We note, however, that the splitting given in \Cref{twqshuf} \emph{does not} coincide with its
            standard half-shuffle decomposition; in particular, the associative product \(\bullet_\Psi\) is
            non-zero.
            This is consistent with the fact, noted in {\rm{\cite[Remark 5.10]{DET2020}}}, that Hoffman's exponential is
            \emph{not} a half-shuffle morphism.
        \item Defining the area\footnote{This terminology comes from the interpretation of half-shuffles as integration
            operators (see e.g. \cite[Section 5.1]{DET2020})} operations $\area_\Psi(u,v)\coloneq u \succ_\Psi v - v \succ_\Psi u$ and
            $\area(u,v)\coloneq u \succ v - v \succ u$, for $u,v \in H$, we see immediately that
        \[
            \Psi\bigl( \area_\Psi(u,v)\bigr)=\area\bigl(\Psi(u),\Psi(v)\bigl).
        \]
        This is an isomorphism property of Tortkara algebras, introduced by Dzhumadil'daev in 2007 \cite{dzhu}.
\end{enumerate}
\end{remark}


\subsection{Coalgebra morphisms induced by formal power series}
\label{sss:coalg.pseries}

We now introduce a special class of coalgebra automorphisms of \(H_{\qsh}\), described in \cite{Hoffman2000,HofIha2017}.
Recall that composition of an integer \(n\ge 1\) refers to a sequence \( I=(i_1,\dotsc,i_k) \) of positive integers
such that \(i_1+\dotsb+i_k=n\). We write \(\mathcal C(n)\) for the set of all compositions of \(n\). For any word \(w=s_1\dotsm s_n\in H\) and composition \(I=(i_1,\dotsc,i_k)\in\mathcal C(n)\) we define the word $I[w] \in H$
\[
        I[w]\coloneq [s_1\dotsm s_{i_1}][s_{i_1+1}\dotsm s_{i_1+i_2}]\dotsm[s_{i_1+\dotsb+i_{k-1}+1}\dotsm s_n].
\]

Formal diffeomorphisms \(f\in t\mathbb R\dbra{t}\) induce linear endomorphisms of \(H\) in the following way: suppose
that
\[
        f(t) = \sum_{n=1}^\infty c_nt^n,
\]
and define
\index[general]{Psif@$\Psi_f$}
\begin{align}
    \label{eq:Psif}
    \Psi_f(w)\coloneq\sum_{I\in\mathcal C(\ell(w))}c_{i_1}\dotsm c_{i_k}I[w].
\end{align}

\begin{remark}
    \label{rem:specialCase}
    Note that this a special case of \Cref{rem:coalgebraEndomorphisms} with
    \begin{align*}
        \zeta( a_1 \cdots a_k ) := c_k [a_1 \cdots a_k].
    \end{align*}
\end{remark}

Formal diffeomorphisms with \(c_1\neq 0\) are invertible with respect to the composition of formal power
series.
In this case, \(\Psi_f\) is an automorphism, and it can be shown that
\(\Psi_f \circ \Psi_g =\Psi_{f \circ g}\), and in particular \(\Psi_f^{-1}=\Psi_{f^{-1}}\), \cite{HofIha2017}.
Finally, we observe that \(\Psi_f\) is always a coalgebra morphism \cite{FPT2016}; this means that the identity
\[
        \Delta_{\scriptscriptstyle{\text{dec}}}\circ\Psi_f=(\Psi_f\otimes\Psi_f)\circ\Delta_{\scriptscriptstyle{\text{dec}}}
\]
holds. Moreover, by definition, the map \(\Psi_f\) is graded, that is, \(\Psi_f(H_n)\subset H_n\) for all \(n\ge 0\).
Therefore, any invertible diffeomorphism induces a deformed quasi-shuffle algebra which we denote by \(H_f\).

In \Cref{ssec:generalizedSumSig} we restrict ourselves to deformed quasi-shuffles induced from invertible formal diffeomorphisms, since in this case the map 
\(\Psi_f\) has a direct interpretation in terms of the iterated-sums signature.



\section{Iterated-sums signatures}
\label{sec:pouterVarying}

We start this section by recalling the definition of the iterated-sums signature introduced in
\cite{DET2020}.
Fix integers \(d\ge 1\) and \(N>0\). A \emph{\(d\)-dimensional time series of length \(N\)} is a sequence of vectors
\(x=(x_0,\dotsc,x_{N-1})\in(\mathbb R^d)^N\).
The following notation for elements in the time series \(x\) is put in place:
\[
    x_j=(x_j^{(1)},\dotsc,x_j^{(d)}),
\]
and it is extended to include brackets in \(\mathfrak A\) by defining
\begin{equation}
    x_j^{[\letter i_1\dotsm\letter i_n]}\coloneq x_j^{(i_1)}\dotsm x_j^{(i_n)}.
    \label{eq:bracketext}
\end{equation}
Given a \(d\)-dimensional time series of length \(N\), its increment series, denoted by \(\delta x\), is also a \(d\)-dimensional time series of
length \(N-1\) with entries defined by \(\delta x_k\coloneq x_{k}-x_{k-1}\).
We also denote, for \(a=[\letter i_1\dotsm\letter i_n]\in\mathfrak A\),
\[
    \delta x^a_j\coloneq\left( x_{j}^{(i_1)}-x_{j-1}^{(i_1)} \right)\dotsm\left( x_{j}^{(i_n)}-x_{j-1}^{(i_n)} \right).
\]

\begin{definition}
Let \(x\) be a \(d\)-dimensional time series, and denote by \(\delta x\) its increment series. The
\emph{iterated-sums signature} of \(x\) is the two-parameter family $(\ISS(x)_{n,m}:0\le n\le m\le
N)$ of linear maps in \(\Hqsh'\) such that
\(\ISS(x)_{n,n}=\varepsilon\), and defined recursively by
\(\langle\ISS(x)_{n,m},\e\rangle\coloneq 1\), and
\[
    \langle \ISS(x)_{n,m},a_1\dotsm a_p\rangle
    \coloneq\sum_{j=n+1}^{m}\langle\ISS(x)_{n,j-1},a_1\dotsm a_{p-1}\rangle\ \delta x_{j}^{a_p}
\]
for all words \(a_1\dotsm a_p\in\mathfrak A^*\).
\label{def:ISS}
\end{definition}

We recall that, as a formal word series, the map \(\ISS(x)_{n,m}\) can be expressed as the time-ordered product
(defined with respect to the concatenation product in $T(S(A))$)
\begin{equation}
        \ISS(x)_{n,m}=\vec{\prod_{n< j\le m}}\left( \varepsilon+\sum_{a\in\mathfrak A}\delta x_j^a a \right).
        \label{eq:ISS}
\end{equation}

In fact, \cref{eq:ISS} can be seen to arise as the solution to a fixed-point equation in \(\Hqsh\).
Indeed, from \Cref{def:ISS} we see that for any word \(a_1\dotsm a_p\in\mathfrak A^*\),
\begin{align*}
    \delta\langle\ISS(x)_{n,\cdot},a_1\dotsm a_p\rangle_m
    &= \langle\ISS(x)_{n,m},a_1\dotsm a_p\rangle-\langle\ISS(x)_{n,m-1},a_1\dotsm a_p\rangle \\
    &= \langle\ISS(x)_{n,m-1}, a_1\dotsm a_{p-1}\rangle\delta x_m^{a_p}.
\end{align*}
Therefore, the equality between word series
\[
    \delta\ISS(x)_{n,m} = \ISS(x)_{n,m-1}\Phi(\delta x_m),\quad
    \ISS(x)_{n,n}=\varepsilon
\]
holds, where the ``polynomial extension'' map \(\Phi\colon\R^d\to S(\mkern-3.5mu(A)\mkern-3.5mu)\)
(c.f. \cite[eq.~60]{NovThi2004}) is defined by 
\[
    \Phi(z)=\sum_{n=1}^\infty\left( \sum_{\letter i\in A}z^{(i)}[\letter i] \right)^n=\sum_{a\in\mathfrak A}z^aa,
\]
(with respect to the commutative product in $S(\mkern-3.5mu(A)\mkern-3.5mu)$). Note that \(\Phi\) amounts to a geometric series in the completed symmetric algebra \(S(\mkern-3.5mu(A)\mkern-3.5mu)\) -- recall from \Cref{sec:quasi-shuffle} that the bracket \([\cdot\cdot]\) denotes the symmetric
tensor product in \(S(A)\).
This extension has also been considered in a Machine
Learning context by Toth, Bonnier and Oberhauser \cite{TBO2020}.

We now record the two most relevant properties of the iterated-sums signature of a \(d\)-dimensional
time series, shown in \cite[Theorem 3.4]{DET2020}.
\begin{theorem}~
        \begin{enumerate}
                \item For each \(0\le n\le m\le N\), the map \(\ISS(x)_{n,m}\) is a quasi-shuffle algebra character.
                \item (Chen's identity) For any \(0\le n\le n'\le n''\le N\) we have
                        \[
                                \ISS(x)_{n,n'}\ISS(x)_{n',n''}=\ISS(x)_{n,n''}.
                        \]
        \end{enumerate}
        \label{thm:issprop}
\end{theorem}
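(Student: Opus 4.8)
The plan is to derive both statements from the time-ordered product representation \eqref{eq:ISS}, which exhibits $\ISS(x)_{n,m}$ as the convolution product (equivalently, the concatenation of formal word series) of the elementary functionals $g_j\coloneqq\varepsilon+\sum_{a\in\mathfrak A}\delta x_j^a\,a$, ordered by increasing $j$ over the range $n<j\le m$. For the character property (1) I would first show that each single factor $g_j$ is already a quasi-shuffle character, and then invoke the general principle that an ordered convolution of characters is again a character. For Chen's identity (2) I would argue directly from \eqref{eq:ISS} using associativity of the convolution product.

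For the single factor, note that $g_j$ is supported on words of length at most one. I evaluate it on $u\star v$ and use the recursion \eqref{qshufprod} to observe that every word occurring in $u\star v$ has length at least $\max(\ell(u),\ell(v))$ (an immediate induction, since concatenation adds a letter and the bracketing term $(u\star v)[ab]$ also adds a single letter). Hence $\langle g_j, u\star v\rangle$ vanishes unless both $u$ and $v$ are either the empty word or a single letter. The empty-word cases are trivial since $\varepsilon$ is the unit, and for two single letters $a,b$ only the term $[ab]$ in $a\star b=ab+ba+[ab]$ contributes, giving $\langle g_j, a\star b\rangle=\delta x_j^{[ab]}=\delta x_j^a\,\delta x_j^b=\langle g_j,a\rangle\langle g_j,b\rangle$, where the key identity $\delta x_j^{[ab]}=\delta x_j^a\,\delta x_j^b$ is read off directly from \eqref{eq:bracketext}. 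The convolution step is then routine: for characters $\phi,\psi$ I write $\langle\phi\psi, u\star v\rangle=\langle\phi\otimes\psi,\Delta_{\scriptscriptstyle{\text{dec}}}(u\star v)\rangle$, use that $\Delta_{\scriptscriptstyle{\text{dec}}}$ is an algebra morphism for $\star$ (the bialgebra compatibility already established for $\Hqsh$), expand with $\Delta_{\scriptscriptstyle{\text{dec}}} u=\sum u_{(1)}\otimes u_{(2)}$ and likewise for $v$, apply the character property of $\phi$ and $\psi$, and reorder the scalar factors using commutativity of $\R$ to obtain $\langle\phi\psi,u\rangle\langle\phi\psi,v\rangle$. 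Since $\varepsilon$ and each $g_j$ are characters, their ordered convolution $\ISS(x)_{n,m}$ is a character.

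For Chen's identity I would use \eqref{eq:ISS} together with associativity of the convolution product. The ordered product over $n<j\le n'$ convolved with the ordered product over $n'<j\le n''$ equals, by associativity and because the two index ranges concatenate to $n<j\le n''$, the ordered product over $n<j\le n''$, which is exactly $\ISS(x)_{n,n''}$. A fully explicit alternative is available: expanding \eqref{eq:ISS} yields $\langle\ISS(x)_{n,m},a_1\dotsm a_p\rangle=\sum_{n<j_1<\dotsb<j_p\le m}\delta x_{j_1}^{a_1}\dotsm\delta x_{j_p}^{a_p}$, and splitting each strictly increasing tuple at its unique threshold relative to $n'$ reproduces, term by term, the deconcatenation sum defining the convolution $\ISS(x)_{n,n'}\ISS(x)_{n',n''}$.

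The main obstacle is the first lemma, namely verifying that the elementary factor $g_j$ is a character: this hinges precisely on the combinatorial fact that the quasi-shuffle of two words has minimal length equal to the maximum of their lengths, which is what forces the low-degree functional $g_j$ to be multiplicative on $\star$. The convolution lemma is routine given the bialgebra structure of $\Hqsh$, and Chen's identity is essentially immediate from the ordered-product form. Should one prefer a self-contained argument for (1) that avoids the convolution lemma, the same conclusion follows by induction on $\ell(u)+\ell(v)$: splitting the defining double sum of $\langle\ISS(x)_{n,m},u\rangle\langle\ISS(x)_{n,m},v\rangle$ into the regions $j<k$, $j>k$ and $j=k$ (a summation-by-parts) reproduces term by term the three summands of the quasi-shuffle recursion \eqref{qshufprod}, the diagonal $j=k$ again producing the bracketed term via $\delta x_j^a\,\delta x_j^b=\delta x_j^{[ab]}$.
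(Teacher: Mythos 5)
Your argument is correct, but note that this paper does not actually prove \Cref{thm:issprop}: it only \emph{records} the statement, citing \cite[Theorem~3.4]{DET2020}, so there is no in-paper proof to compare against. Your main route --- showing that each one-step factor $g_j=\varepsilon+\sum_{a\in\mathfrak A}\delta x_j^a\,a$ is a quasi-shuffle character because $g_j$ is supported in length $\le 1$ while every word in $u\star v$ has length $\ge\max(\ell(u),\ell(v))$, and then using that convolution of characters is a character since $\Delta_{\scriptscriptstyle{\text{dec}}}$ is a $\star$-algebra morphism --- is sound and slightly more structural than the argument in \cite{DET2020}, which proceeds by the summation-by-parts induction you sketch at the end (splitting the double sum over $j<k$, $j>k$, $j=k$ against the three terms of \eqref{qshufprod}). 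The only point worth making explicit in your length argument is that in the excluded cases ($\ell(u)\ge 2$ or $\ell(v)\ge 2$) the right-hand side $\langle g_j,u\rangle\langle g_j,v\rangle$ also vanishes, so multiplicativity holds there too; and you should state (or verify, as is immediate from \Cref{def:ISS}) the factorization $\ISS(x)_{n,m}=\ISS(x)_{n,m-1}\,g_m$ underlying \eqref{eq:ISS}, on which both your character and Chen arguments rest. With those two sentences added, the proof is complete; the Chen identity via associativity of convolution (or, equivalently, splitting each increasing tuple $n<j_1<\dotsb<j_p\le n''$ at the threshold $n'$) is exactly right.
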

\begin{remark}
    Property (1) in the above Theorem is the iterated-sums analogue of Ree's shuffle identity for iterated integrals.
\end{remark}

In fact, in light of the commutative quasi-shuffle structure of $\Hqsh$, one can be more precise about the nature of item (1) in \Cref{thm:issprop}.
Let \(\mathfrak X_N\) denote the space of real-valued time series with fixed time horizon \(N\in\mathbb N\).
It carries itself a commutative quasi-shuffle structure, given
by
\[
    (x\succeq y)_k\coloneq\sum_{j=1}^{k}(x_{j-1}-x_0)\delta y_j,
    \qquad 
    (x\odot y)_k \coloneq\sum_{j=1}^{k}\delta x_j\delta y_j.
\]
The corresponding commutative and associative quasi-shuffle product is \((x, y)\mapsto(x_\cdot-x_0)(y_\cdot-y_0)\) in \(\mathfrak X_N\). For a given \(d\)-dimensional time series $x$, we define a map \(\sigma(x)\colon A\to\mathfrak X_N\)
by
\[
    \langle\sigma(x), [\letter i]\rangle_k\coloneq x^{(i)}_k-x^{(i)}_0,\quad 0\le k\le N.
\]
By \Cref{thm:loday}, it admits a unique extension to $\Hqsh$ as a commutative quasi-shuffle
morphism (in the sense of \Cref{def:qshmorph}).

\begin{proposition}
    The unique extension \(\sigma(x)\colon\Hqsh\to\mathfrak X_N\) is such that for all \(0\le k\le N\)
    and words \(w\in\mathfrak A^*\) we have
    \[
        \langle \sigma(x),w\rangle_k=\langle\ISS(x)_{0,k},w\rangle.
    \]
\end{proposition}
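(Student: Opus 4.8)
The plan is to read the statement as an equality of two maps \(\Hqsh\to\mathfrak X_N\) and to prove it by the uniqueness half of Loday's universal property (\Cref{thm:loday}), exactly as it was used to \emph{define} \(\sigma(x)\). I introduce the auxiliary map \(\Sigma(x)\colon\Hqsh\to\mathfrak X_N\) whose value on a word \(w\) is the time series reading off the iterated-sums signature over the growing window \([0,k]\), namely \(\langle\Sigma(x),w\rangle_k\coloneq\langle\ISS(x)_{0,k},w\rangle\). The asserted identity is precisely \(\Sigma(x)=\sigma(x)\). Since \(\sigma(x)\) is characterized as the \emph{unique} commutative quasi-shuffle morphism extending \(\letter i\mapsto(x^{(i)}_\cdot-x^{(i)}_0)\), it suffices to check two things: (i) that \(\Sigma(x)\) agrees with \(\sigma(x)\) on the generators \([\letter i]\), and (ii) that \(\Sigma(x)\) is itself a quasi-shuffle morphism, i.e.\ \(\Sigma(x)(u\succ v)=\Sigma(x)(u)\succeq\Sigma(x)(v)\) and \(\Sigma(x)(u\bullet v)=\Sigma(x)(u)\odot\Sigma(x)(v)\).

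Part (i) is immediate from \Cref{def:ISS}: \(\langle\Sigma(x),[\letter i]\rangle_k=\langle\ISS(x)_{0,k},[\letter i]\rangle=\sum_{j=1}^{k}\delta x^{(i)}_j=x^{(i)}_k-x^{(i)}_0\), which is the defining value of \(\sigma(x)\) on \([\letter i]\). The content is therefore (ii), which I would verify on basis words and extend by linearity. The three ingredients are the increment identity \(\delta\langle\ISS(x)_{0,\cdot},\bar v a\rangle_j=\langle\ISS(x)_{0,j-1},\bar v\rangle\,\delta x_j^{a}\) derived from \Cref{def:ISS}; the recursive descriptions \(u\succ\bar v a=(u\star\bar v)a\) and \(\bar u a\bullet\bar v b=(\bar u\star\bar v)[ab]\) from \eqref{eq:qshrec}; and the elementary identity \(\delta x_j^{a}\,\delta x_j^{b}=\delta x_j^{[ab]}\) coming from \eqref{eq:bracketext}.

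For the half-shuffle, write a nonempty word \(v=\bar v a\) with last bracket \(a\). Using \(\langle\ISS(x)_{0,0},u\rangle=0\) for nonempty \(u\), the centering in \(\succeq\) drops out and the increment identity gives
\[
\bigl(\Sigma(x)(u)\succeq\Sigma(x)(v)\bigr)_k=\sum_{j=1}^{k}\langle\ISS(x)_{0,j-1},u\rangle\,\langle\ISS(x)_{0,j-1},\bar v\rangle\,\delta x_j^{a}.
\]
Now the character property from \Cref{thm:issprop}(1) collapses the product of the two evaluations into \(\langle\ISS(x)_{0,j-1},u\star\bar v\rangle\), and the \(\ISS\) recursion of \Cref{def:ISS} reassembles the resulting sum as \(\langle\ISS(x)_{0,k},(u\star\bar v)a\rangle=\langle\ISS(x)_{0,k},u\succ v\rangle=\Sigma(x)(u\succ v)_k\). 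The \(\odot\) identity is entirely analogous: writing \(u=\bar u a\) and \(v=\bar v b\), one expands \(\bigl(\Sigma(x)(u)\odot\Sigma(x)(v)\bigr)_k=\sum_{j=1}^{k}\delta\langle\ISS(x)_{0,\cdot},u\rangle_j\,\delta\langle\ISS(x)_{0,\cdot},v\rangle_j\) and uses \(\delta x_j^a\delta x_j^b=\delta x_j^{[ab]}\) together with the character property to obtain \(\langle\ISS(x)_{0,k},(\bar u\star\bar v)[ab]\rangle=\langle\ISS(x)_{0,k},u\bullet v\rangle\). With (i) and (ii) in hand, \Cref{thm:loday} forces \(\Sigma(x)=\sigma(x)\), which is the claim.

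I expect the only genuinely non-routine step to be the recognition that the character property of \(\ISS\) is exactly what is needed to turn the \emph{product} of two signature evaluations — produced by the definitions of \(\succeq\) and \(\odot\) on \(\mathfrak X_N\) — into a \emph{single} evaluation against the quasi-shuffle product \(\star\), after which the two recursions match verbatim. It is worth flagging that one cannot shortcut the argument by merely comparing \(\star\)-characters that agree on the letters \([\letter i]\): as a plain commutative algebra \((\Hqsh,\star)\) has many more generators, so it is the full quasi-shuffle morphism property, and hence the freeness in \Cref{thm:loday}, that lets the generators \([\letter i]\) alone pin down the map.
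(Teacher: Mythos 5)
Your argument is correct, but it runs in the opposite direction from the paper's. The paper starts from \(\sigma(x)\), whose quasi-shuffle morphism property is already guaranteed by \Cref{thm:loday}, and simply \emph{computes} its values: first on brackets \([\letter i_1\dotsm\letter i_n]\) via \(\odot\) (using \([\letter i_1]\bullet\dotsm\bullet[\letter i_n]=[\letter i_1\dotsm\letter i_n]\)), then by induction on the length of the word via \(u\succ a=ua\) and the definition of \(\succeq\), matching the recursion in \Cref{def:ISS} at each step. You instead package the right-hand side into a map \(\Sigma(x)\), verify that \(\Sigma(x)\) is itself a quasi-shuffle morphism agreeing with \(\sigma(x)\) on the generators \([\letter i]\), and invoke the uniqueness clause of \Cref{thm:loday}. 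The key practical difference is that your verification of the \(\succeq\)- and \(\odot\)-compatibility of \(\Sigma(x)\) needs the character property \(\langle\ISS,u\rangle\langle\ISS,\bar v\rangle=\langle\ISS,u\star\bar v\rangle\) from \Cref{thm:issprop}(1) as an external input (legitimately, since the paper quotes it from prior work), whereas the paper's induction uses only \Cref{def:ISS} and the freeness of \(\Hqsh\) — so the paper's route actually \emph{re-derives} the character property as a corollary rather than consuming it, and your morphism check for \(\Sigma(x)\) is essentially a proof of \Cref{prop:iss.ids} (the case \(n=0\)) folded into this proposition. Your closing remark is well taken: since \((\Hqsh,\star)\) is not generated as a commutative algebra by the single letters, it is the quasi-shuffle freeness, not mere agreement of \(\star\)-characters on \([\letter 1],\dotsc,[\letter d]\), that pins the map down. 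Both proofs are sound; the paper's is the more economical and self-contained of the two.
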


\begin{proof}
    We first observe that since \([\letter i_1]\bullet\dotsm\bullet[\letter i_n]=[\letter i_1\dotsm\letter i_n]\) for all
    \(\letter i_1,\dotsc,\letter i_n\in A\), we have
    \begin{align*}
        \langle\sigma(x),[\letter i_1\dotsm\letter i_n]\rangle_k&= (\langle \sigma(x),[\letter
        i_1]\rangle\odot\dotsm\odot\langle\sigma(x),[\letter i_n]\rangle)_k\\
        &= \sum_{j=1}^{k}\delta
        x_j^{(i_1)}\dotsm\delta x_j^{(i_n)}\\
        &= \langle\ISS(x)_{0,k},[\letter i_1\dotsm\letter i_n]\rangle
    \end{align*}
    by \cref{eq:bracketext}.
    This shows the identity for all words of length 1.
    Now, suppose the equality is proven for all words up to length \(p\).
    Any word \(w\) of length \(p+1\) can be decomposed as \(w=ua\) for some \(u\in\mathfrak A^*\) with \(\ell(u)=p\) and
    \(a\in\mathfrak A\).
    Since, from \cref{eq:qshrec} (set \(v=\e\)), \(u\succ a=ua\) for any \(u\in\mathfrak A^*\) and \(a\in\mathfrak A\), we see that
    \begin{align*}
        \langle\sigma(x),ua\rangle_k&= \langle\sigma(x),u\succ a\rangle_k\\
        &= (\langle\sigma(x),u\rangle\succeq\langle\sigma(x),a\rangle)_k\\
        &= \sum_{j=1}^{k}\langle\ISS(x)_{0,j-1},u\rangle\delta x^{a}_{j}\\
        &= \langle\ISS(x)_{0,k},ua\rangle
    \end{align*}
    by \Cref{def:ISS}.
\end{proof}

To construct the maps \(x\mapsto \ISS(x)_{n,k}\) for \(0\le n\le k\le N\), one can follow a similar route, by first
considering the map \(x\mapsto \tilde x=(\tilde x_k:n\le k\le N)\) with \(\tilde
x_k=x_k-x_n\).
The image of \(\mathfrak X_N\) under this map will be denoted by \(\tilde{\mathfrak
X}_{n,N}\).
The quasi-shuffle structure defined above can be transported to \(\tilde{\mathfrak X}_{n,N}\) via this map, i.e., \(\tilde
x\succeq\tilde y\coloneq\widetilde{x\succeq y}\), \(\tilde x\odot\tilde y\coloneqq\widetilde{x\odot
y}\).
It is not difficult to see that then the same procedure applied now to \(\tilde{\mathfrak
X}_{n,N}\) gives rise to \(\ISS(x)_{n,k}\) for \(0\le n\le k<N\).
In particular we obtain the following

\begin{proposition}
    Let \(x\) be \(d\)-dimensional time series and fix \(0\le n\le m\le N\). The identities
    \begin{align}
        \langle\ISS(x)_{n,m},u\succ v\rangle   &= \sum_{n<k\le m} \langle\ISS(x)_{n,k-1},u\rangle \delta\langle\ISS(x)_{n,\cdot},v\rangle_k\label{eq:iss.ids1}\\
        \langle\ISS(x)_{n,m},u\bullet v\rangle &= \sum_{n< k\le m} \delta\langle\ISS(x)_{n,\cdot},u\rangle_{k} \delta\langle\ISS(x)_{n,\cdot},v\rangle_k\label{eq:iss.ids2}
    \end{align}
    hold for all \(u,v\in\Hqsh\).
    \label{prop:iss.ids}
\end{proposition}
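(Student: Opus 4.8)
The plan is to deduce both identities from the fact, established in the preceding Proposition, that \(\sigma(x)\colon\Hqsh\to\mathfrak X_N\) is a quasi-shuffle morphism satisfying \(\langle\sigma(x),w\rangle_k=\langle\ISS(x)_{0,k},w\rangle\). Since the half-shuffle \(\succ\), the product \(\bullet\), the pairings, and the difference operator \(\delta\) are all bilinear in \((u,v)\), it suffices to verify the two formulas for basis words \(u,v\in\mathfrak A^*\) of positive weight and then extend by linearity; the cases where \(u\) or \(v\) equals \(\e\) are covered by the conventions of the unital extension and are checked directly (they reduce to a telescoping sum).

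First I would treat the case \(n=0\). By \Cref{def:qshmorph}, \(\sigma(x)\) intertwines \(\succ\) with \(\succeq\) and \(\bullet\) with \(\odot\), so that
\[
    \langle\ISS(x)_{0,m},u\succ v\rangle
    = \langle\sigma(x),u\succ v\rangle_m
    = \bigl(\langle\sigma(x),u\rangle\succeq\langle\sigma(x),v\rangle\bigr)_m.
\]
Unwinding the definition of \(\succeq\) in \(\mathfrak X_N\) produces the sum \(\sum_{j=1}^m(\langle\sigma(x),u\rangle_{j-1}-\langle\sigma(x),u\rangle_0)\,\delta\langle\sigma(x),v\rangle_j\). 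The key observation is that the boundary term \(\langle\sigma(x),u\rangle_0=\langle\ISS(x)_{0,0},u\rangle=\langle\varepsilon,u\rangle\) vanishes for every word \(u\) of positive weight; translating the remaining terms back through the preceding Proposition yields exactly \eqref{eq:iss.ids1}. The identity \eqref{eq:iss.ids2} follows in the same way, now expanding \(\odot\) instead of \(\succeq\); here no boundary term appears and one reads off \(\sum_{0<k\le m}\delta\langle\sigma(x),u\rangle_k\,\delta\langle\sigma(x),v\rangle_k\) directly.

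For arbitrary \(0\le n\le m\le N\) I would invoke the shift construction recorded just before the statement: replacing \(x\) by \(\tilde x=(x_k-x_n)_{n\le k\le N}\) and transporting the quasi-shuffle structure to \(\tilde{\mathfrak X}_{n,N}\) yields a quasi-shuffle morphism \(\tilde\sigma\) with \(\langle\tilde\sigma(x),w\rangle_k=\langle\ISS(x)_{n,k},w\rangle\). The computation above then goes through verbatim, with the summation ranges now starting at \(j=n+1\) and the boundary term \(\langle\ISS(x)_{n,n},u\rangle=\langle\varepsilon,u\rangle=0\) again vanishing. This reduces the general case to the morphism property together with the \(n=0\) computation.

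I expect the main obstacle to be bookkeeping rather than conceptual: one must check that the boundary contribution in the expansion of \(\succeq\) really drops out (which hinges on \(u\) having no \(\e\)-component, i.e.\ \(u\in H^+\)), and that the transport of the quasi-shuffle structure under the shift is compatible with the reindexing of the summation range. As a self-contained alternative that sidesteps the morphism language, one can argue directly: writing \(v=v'b\) and using \Cref{def:ISS} gives \(\delta\langle\ISS(x)_{n,\cdot},v\rangle_k=\langle\ISS(x)_{n,k-1},v'\rangle\,\delta x_k^b\), while \(u\succ v=(u\star v')b\) by \eqref{eq:qshrec}; applying the recursion to the left-hand side and the character property of \Cref{thm:issprop} to identify \(\langle\ISS(x)_{n,k-1},u\rangle\langle\ISS(x)_{n,k-1},v'\rangle\) with \(\langle\ISS(x)_{n,k-1},u\star v'\rangle\) makes both sides coincide termwise. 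The analogous direct argument for \eqref{eq:iss.ids2} writes \(u=u'a\), \(v=v'b\), uses \(u\bullet v=(u'\star v')[ab]\) and the factorization \(\delta x_k^{[ab]}=\delta x_k^a\,\delta x_k^b\).
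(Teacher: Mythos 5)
Your proposal is correct and follows exactly the route the paper intends: the paper states this proposition without a written proof, presenting it as an immediate consequence of the preceding proposition (that \(\sigma(x)\) is a quasi-shuffle morphism computing \(\ISS(x)_{0,\cdot}\)) together with the shift construction for general \(n\), and your main argument --- including the observation that the boundary term \(\langle\sigma(x),u\rangle_0=\langle\varepsilon,u\rangle=0\) for \(u\in H^+\) --- fills in precisely those details. Your self-contained alternative via \Cref{def:ISS}, \cref{eq:qshrec} and the character property is also valid and is a nice independent check.
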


\begin{remark}\label{rmk:algo}
    In more concrete terms, the content of \Cref{prop:iss.ids} can be interpreted as computation formulas for the entries of \(\ISS\).
    Indeed, since every symbol \(a\in\mathfrak A\) which is not a single letter can be written as \(a=a'\bullet\letter i\) for some \(a'\in\mathfrak A\) with \(|a'|=|a|-1\) and \(\letter i\in A\), entries indexed by \(\mathfrak A\) can be computed inductively using \cref{eq:iss.ids2}, starting from single letters.
    Next, one notes that every non-empty word \(w\in\mathfrak A^*\) can be decomposed as \(w=w'a=w'\succ a\) for some \(w'\in\mathfrak A^*\) and \(a\in\mathfrak A\) with \(|w|=|w'|+|a|\), entries indexed by words with length greater than 1 can be computed inductively using \cref{eq:iss.ids1} and the values computed in the previous step.
\end{remark}


\subsection{Generalized iterated-sums signatures}
\label{ssec:generalizedSumSig}

Let \(f\in t\mathbb R\dbra{t}\) be a formal diffeomorphism.
In addition to the map \(\Psi_f\) described in \Cref{sss:coalg.pseries},
it induces a transformation on formal word series by
\[
    f_\otimes(\mathrm R)=\sum_{n=1}^\infty c_n\mathrm R^n.
\]

\begin{definition}
    \label{def:generalized}
        Let \(x\) be a \(d\)-dimensional time series and \(f\in t\mathbb R\dbra{t}\) a formal diffeomorphism. The
        \emph{generalized iterated-sums signature} is the family of linear maps \((\ISS^f(x)_{n,m}:0\le
        n\le m\le N)\) defined by
        \index[general]{ISSf@$\ISS^f$, generalized iterated-sums signature}
        \[
                \ISS^f(x)_{n,m}\coloneq\vec{\prod_{n< j\le m}}
                \left( \varepsilon + f_\otimes\left( \sum_{a\in \mathfrak A}\delta x_j^a a \right) \right).
        \]
\end{definition}

We immediately have

\begin{proposition}
        The generalized iterated-sums signature satisfies Chen's property, that is, for any
        \(0\le n\le n'\le n''\le N\)
        \[
                \ISS^f(x)_{n,n'}\ISS^f(x)_{n',n''}=\ISS^f(x)_{n,n''}.
        \]
\end{proposition}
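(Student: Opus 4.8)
The plan is to recognize that Chen's property is a purely formal consequence of two facts already in place: the associativity of the convolution product on \(\Hqsh'\) (recalled in \Cref{sec:quasi-shuffle}), and the compatibility of the time-ordered product \(\vec{\prod}\) with splitting of its index set. First I would abbreviate the one-step factors by setting
\[
    M^f_j \coloneq \varepsilon + f_\otimes\!\left( \sum_{a\in\mathfrak A}\delta x_j^a\, a \right)\in\Hqsh',
\]
so that, by \Cref{def:generalized},
\[
    \ISS^f(x)_{n,m}=\vec{\prod_{n<j\le m}} M^f_j,
\]
where the product is the (noncommutative, associative) convolution product on \(\Hqsh'\), taken in order of increasing \(j\). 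The only thing one must confirm at this stage is that the \(\vec{\prod}\) notation in \Cref{def:generalized} really denotes this convolution product, and that the counit \(\varepsilon\) is its unit.

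The decisive observation is then that the ordered index set \(\{j : n < j \le n''\}\) is the ordered concatenation of \(\{j : n < j \le n'\}\) followed by \(\{j : n' < j \le n''\}\), with every factor of the first block preceding every factor of the second block in the time-ordering. Since convolution is associative, the time-ordered product factorizes across this split,
\[
    \vec{\prod_{n<j\le n''}} M^f_j
    = \left( \vec{\prod_{n<j\le n'}} M^f_j \right)\!\left( \vec{\prod_{n'<j\le n''}} M^f_j \right)
    = \ISS^f(x)_{n,n'}\,\ISS^f(x)_{n',n''},
\]
which is precisely the claim. The boundary cases \(n=n'\) and \(n'=n''\) are handled by the convention that the empty product equals the convolution unit \(\varepsilon\), so that \(\ISS^f(x)_{n,n}=\varepsilon\) acts as a two-sided identity and the identity above degenerates correctly.

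I expect no genuine obstacle here. The argument uses none of the specific structure of \(f_\otimes\); indeed, the very same reasoning proves Chen's identity for the ordinary \(\ISS\) (item (2) of \Cref{thm:issprop}), the nonlinearity merely changing the individual factors \(M^f_j\) without affecting the multiplicative bookkeeping. The closest thing to a subtle point is purely notational: making explicit that \(\vec{\prod}\) is the associative convolution product and that the interval \((n,n'']\) partitions as \((n,n']\cup(n',n'']\) respecting the time order. Once these are pinned down, the factorization is immediate.
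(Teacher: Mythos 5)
Your argument is correct and is exactly the reasoning the paper has in mind: the paper states this proposition with no written proof (``We immediately have\dots''), precisely because Chen's identity follows at once from the factorization of the time-ordered convolution product over the split $(n,n'']=(n,n']\cup(n',n'']$, independently of the form of the one-step factors. Your write-up simply makes that implicit one-line argument explicit, including the correct treatment of the degenerate cases via the empty product convention.
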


We observe that due to the nonlinear nature of the transformation \(f\) applied inside the product,
expansion of this expression as a proper word series is, in principle, not straightforward.
However we have the following result.

\begin{proposition}
    \label{prp:iss_psi}
        For every \(w\in H\),
        \[
                \langle \ISS^f(x)_{n,m},w\rangle=\langle\ISS(x)_{n,m},\Psi_f(w)\rangle,
        \]
        where $\Psi_f$ is defined in \cref{eq:Psif}.
\end{proposition}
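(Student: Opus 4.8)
The plan is to read the claimed identity as the operator statement $\ISS^f(x)_{n,m}=\Psi_f^*\,\ISS(x)_{n,m}$, where $\Psi_f^*\colon H'\to H'$ is the transpose of $\Psi_f$, characterised by $\langle\Psi_f^*\mathrm R,w\rangle=\langle\mathrm R,\Psi_f(w)\rangle$. The guiding observation is that, because $\Psi_f$ is a coalgebra morphism for $\Delta_{\scriptscriptstyle{\text{dec}}}$ (recalled in \Cref{sss:coalg.pseries}), its transpose is an \emph{algebra} morphism for the convolution product on $H'$: for $\mathrm R,\mathrm T\in H'$,
\[
    \langle\Psi_f^*(\mathrm R\mathrm T),w\rangle
    =\langle\mathrm R\otimes\mathrm T,(\Psi_f\otimes\Psi_f)\Delta_{\scriptscriptstyle{\text{dec}}} w\rangle
    =\langle(\Psi_f^*\mathrm R)(\Psi_f^*\mathrm T),w\rangle .
\]
What makes this useful is that the convolution product $\mathrm R\mathrm T=\sum_w\bigl(\sum_{uv=w}\langle\mathrm R,u\rangle\langle\mathrm T,v\rangle\bigr)w$ is nothing but concatenation of word series, hence exactly the product with respect to which the time-ordered products defining both $\ISS$ and $\ISS^f$ are taken.

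Granting this, each time-ordered product in \cref{eq:ISS} and \Cref{def:generalized} has only finitely many factors — one for every index $n<j\le m$ — so the algebra-morphism property lets me distribute $\Psi_f^*$ over the product. Writing $R_j=\sum_{a\in\mathfrak A}\delta x_j^a a$, the proposition reduces to the single-step identity
\[
    \Psi_f^*\bigl(\varepsilon+R_j\bigr)=\varepsilon+f_\otimes(R_j),
\]
since then $\Psi_f^*\ISS(x)_{n,m}=\prod_{n<j\le m}\Psi_f^*(\varepsilon+R_j)=\prod_{n<j\le m}(\varepsilon+f_\otimes(R_j))=\ISS^f(x)_{n,m}$.

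To verify the single-step identity I test against an arbitrary basis word $w=s_1\cdots s_N\in\mathfrak A^*$. The counit part is immediate, since $\Psi_f$ fixes $\e$ and preserves positive weight, giving $\Psi_f^*\varepsilon=\varepsilon$. For $R_j$, which is supported on words of length $1$, only the length-$1$ component of $\Psi_f(w)$ can contribute. By \cref{eq:Psif} a composition $I\in\mathcal C(N)$ yields a word $I[w]$ whose length equals the number of parts of $I$, so the length-$1$ component comes solely from the one-part composition $I=(N)$, contributing the single merged bracket $c_N[s_1\cdots s_N]$. Hence $\langle\Psi_f^*R_j,w\rangle=c_N\,\delta x_j^{[s_1\cdots s_N]}=c_N\,\delta x_j^{s_1}\cdots\delta x_j^{s_N}$, the last step being the elementary bracket-merge identity $\delta x_j^{[ab]}=\delta x_j^a\delta x_j^b$. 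On the other side $f_\otimes(R_j)=\sum_{k\ge1}c_kR_j^k$, and only the $k=N$ concatenation power can reach length $N$, giving $\langle f_\otimes(R_j),w\rangle=c_N\,\delta x_j^{s_1}\cdots\delta x_j^{s_N}$ as well; the two match.

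The step I would be most careful about is the structural transfer in the first paragraph: pinning down that the transpose of the coalgebra morphism $\Psi_f$ is a morphism for precisely the convolution/concatenation product used to define the signatures. Everything downstream is the bookkeeping that aligns a single merged bracket $[s_1\cdots s_N]$ weighted by $c_N$ (on the $\Psi_f$ side) with the length-$N$ concatenation run $c_N R_j^N$ (on the $f_\otimes$ side). Should one prefer to avoid the transfer lemma entirely, the same conclusion follows by expanding both finite time-ordered products and matching terms through the bijection between compositions $(i_1,\dots,i_k)$ of $\ell(w)$ and the distributions of the letters of $w$ into $k$ consecutive equal-time blocks — more computational, but wholly elementary.
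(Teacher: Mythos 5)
Your proposal is correct and follows essentially the same route as the paper: reduce to a single time step by combining Chen's identity with the fact that the transpose of the coalgebra morphism $\Psi_f$ is a morphism for the convolution (concatenation) product, then match the length-one component $c_N[s_1\cdots s_N]$ of $\Psi_f(w)$ against the $c_N R_j^N$ term of $f_\otimes(R_j)$ via $\delta x_j^{[s_1\cdots s_N]}=\delta x_j^{s_1}\cdots\delta x_j^{s_N}$. The only difference is presentational — you phrase the reduction as an explicit transfer lemma for $\Psi_f^*$, where the paper states it in one line — and your single-step verification is the same computation as in the paper.
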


\begin{proof}
        First we observe that, by Chen's property and the fact that \(\Psi_f\) is a coalgebra morphism,
        it suffices to show that the equality holds when \(m=n+1\), i.e., we only need to show that
        \[
            \left\langle f_\otimes\left( \sum_{a\in\mathfrak A}\delta x_n^aa
            \right),w\right\rangle=\sum_{a\in\mathfrak A}\delta x_n^a\langle
                a,\Psi_f(w)\rangle.
        \]
        Moreover, since the identity is linear in \(w\), we can further restrict ourselves to the case
        \(w\in\mathfrak A^*\).

        Now, by definition,\begin{align*}
            f_\otimes\left( \sum_{a\in\mathfrak A}\delta x_n^a a \right)&= \sum_{m=1}^\infty c_m\left(
                \sum_{a\in\mathfrak A}\delta x_n^a a
                \right)^m\\
                &= \sum_{m=1}^\infty c_m\sum_{a_1,\dotsc,a_m\in\mathfrak A}\delta x_n^{a_1}\dotsm\delta x_n^{a_m}a_1\dotsm
                a_m.
        \end{align*}
        Since \(\delta x_n^{a_1}\dotsm\delta x_n^{a_m}=\delta x_n^{[a_1\dotsm a_m]}\) we obtain that, if
        \(w=a_1\dotsm a_m\in\mathfrak A^*\),
        \[
            \left\langle f_\otimes\left( \sum_{a\in\mathfrak A}\delta x_n^a a \right),w\right\rangle=c_m\delta x_n^{[a_1\cdots
                a_m]}.
        \]

        On the other hand, we have
        \begin{align*}
            \left\langle\sum_{a\in\mathfrak A}\delta x_n^a a,\Psi_f(w)\right\rangle
            &= \sum_{a\in\mathfrak A}\delta x_n^a\langle a,\Psi_f(w)\rangle\\
                &= \sum_{a\in\mathfrak A}\delta x_n^a\sum_{J\in\mathcal C(m)}c_{i_1}\dotsm c_{i_k}\langle a,I[w]\rangle.
        \end{align*}
        However, in the last sum the only word of length 1 of the form \(I[w]\) is
        \((m)[w]=[a_1\dotsm a_m]\). Therefore
        \[
            \left\langle\sum_{a\in\mathfrak A}\delta x_n^a a,\Psi_f(w)\right\rangle= c_m\delta x_n^{[a_1\cdots a_m]}
        \]
        and the equality is proven in this case.
\end{proof}

An application of \Cref{prop:psif} yields
\begin{corollary}
        The generalized iterated-sums signature is a character over the  Hopf
        algebra \(H_f\) with deformed quasi-shuffle product.
        \label{crl:twchar}
\end{corollary}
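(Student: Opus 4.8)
The statement asks us to show that, for each \(0\le n\le m\le N\), the linear functional \(\ISS^f(x)_{n,m}\) is a character over \(H_f\), i.e., an algebra morphism for the deformed quasi-shuffle product \(\star_{\Psi_f}\):
\[
    \langle\ISS^f(x)_{n,m},u\star_{\Psi_f}v\rangle=\langle\ISS^f(x)_{n,m},u\rangle\,\langle\ISS^f(x)_{n,m},v\rangle
\]
for all \(u,v\in H_f\), together with preservation of the unit. The plan is to transport the entire question back to the already-established character property of the \emph{un-deformed} iterated-sums signature by means of the intertwining map \(\Psi_f\), so that no new computation is needed.

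Concretely, I would chain together three prior results in four steps. First, apply \Cref{prp:iss_psi} to the left-hand side, rewriting the pairing of \(\ISS^f(x)_{n,m}\) against \(u\star_{\Psi_f}v\) as the pairing of the ordinary \(\ISS(x)_{n,m}\) against \(\Psi_f(u\star_{\Psi_f}v)\). Second, since \(\Psi_f\colon H_f\to H_{\qsh}\) is an algebra isomorphism by \Cref{prop:psif}, the image of the deformed product factorizes as \(\Psi_f(u)\star\Psi_f(v)\) with respect to the classical quasi-shuffle product \(\star\) on \(H_{\qsh}\). Third, invoke the character property of \(\ISS(x)_{n,m}\) for \(\star\), which is item (1) of \Cref{thm:issprop}, to split the pairing into the product \(\langle\ISS(x)_{n,m},\Psi_f(u)\rangle\,\langle\ISS(x)_{n,m},\Psi_f(v)\rangle\). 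Fourth, apply \Cref{prp:iss_psi} once more, now in the reverse direction, to recognize each factor as \(\langle\ISS^f(x)_{n,m},u\rangle\) and \(\langle\ISS^f(x)_{n,m},v\rangle\) respectively. Composing the four equalities delivers exactly the multiplicativity claimed. For the unit, I would simply observe that the unit of \(H_f\) is \(\Psi_f^{-1}(\e)\), whence \Cref{prp:iss_psi} gives \(\langle\ISS^f(x)_{n,m},\Psi_f^{-1}(\e)\rangle=\langle\ISS(x)_{n,m},\e\rangle=1\).

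There is no genuine obstacle here, as this is a direct corollary in which all the structural work has already been discharged. The only point deserving a modicum of attention is bookkeeping of the two product structures in play — the deformed product \(\star_{\Psi_f}\) on the source \(H_f\) versus the classical quasi-shuffle \(\star\) on the target \(H_{\qsh}\) — and ensuring that \(\Psi_f\) is applied with the correct source/target at each step. This matching is precisely the content of the isomorphism in \Cref{prop:psif}, which is therefore the pivot of the argument; once it is in place, the multiplicativity of \(\ISS^f\) for \(\star_{\Psi_f}\) is equivalent to that of \(\ISS\) for \(\star\), and the conclusion of \Cref{crl:twchar} follows immediately.
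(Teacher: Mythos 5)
Your proof is correct and follows the same route the paper intends: the paper dispatches this corollary with the single line ``An application of \Cref{prop:psif} yields,'' and the four-step chain you spell out (transport via \Cref{prp:iss_psi}, factor the deformed product through \(\Psi_f\) using \Cref{prop:psif}, apply the character property from \Cref{thm:issprop}, transport back) is precisely the argument being compressed there. Your explicit treatment of the unit is a harmless addition.
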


Also, since \(\Psi_f\) is a quasi-shuffle morphism, from \Cref{prp:iss_psi} we obtain the following analogue of \Cref{prop:iss.ids}:
\begin{corollary}
    \label{crl:issf.iter}
    Let \(x\) be a \(d\)-dimensional time series and fix \(0\le n\le m\le N\).
    The identities
    \begin{align*}
        \langle\ISS^f(x)_{n,m},u\succ_f v\rangle&= \sum_{n < k \le m}\langle\ISS^f(x)_{n,k-1},u\rangle\,\delta\langle\ISS^f(x)_{n,\cdot},v\rangle_k\\
        \langle\ISS^f(x)_{n,m},u\bullet_f v\rangle&= \sum_{n < k \le m}\delta\langle\ISS^f(x)_{n,\cdot},u\rangle_k\,\delta\langle\ISS^f(x)_{n,\cdot},v\rangle_k
    \end{align*}
    hold for all \(u,v\in H_{\qsh}\).
\end{corollary}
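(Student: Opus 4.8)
The plan is to reduce both identities to their un-deformed analogues in \Cref{prop:iss.ids} and then translate the outcome back, using \Cref{prp:iss_psi} as the dictionary between \(\ISS^f\) and \(\ISS\). The one structural fact I would record first is that \(\Psi_f\) is a quasi-shuffle morphism from the deformed algebra \(H_f=(H,\succ_f,\bullet_f)\) onto the quasi-shuffle algebra \((H,\succ,\bullet)\) underlying \(H_{\qsh}\); this is immediate from the defining relations of the deformed products, which give \(\Psi_f(u\succ_f v)=\Psi_f(u)\succ\Psi_f(v)\) and \(\Psi_f(u\bullet_f v)=\Psi_f(u)\bullet\Psi_f(v)\).

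For the first identity I would start from the left-hand side and apply \Cref{prp:iss_psi} to rewrite it as \(\langle\ISS(x)_{n,m},\Psi_f(u\succ_f v)\rangle\), then use the morphism property to replace the argument by \(\Psi_f(u)\succ\Psi_f(v)\). At this point \cref{eq:iss.ids1}, applied with arguments \(\Psi_f(u)\) and \(\Psi_f(v)\) (both lying in \(H\)), expands the expression as a sum over \(n<k\le m\). Finally I would translate each factor back: \Cref{prp:iss_psi} turns \(\langle\ISS(x)_{n,k-1},\Psi_f(u)\rangle\) into \(\langle\ISS^f(x)_{n,k-1},u\rangle\), while the same proposition applied termwise gives
\[
    \delta\langle\ISS(x)_{n,\cdot},\Psi_f(v)\rangle_k=\langle\ISS(x)_{n,k},\Psi_f(v)\rangle-\langle\ISS(x)_{n,k-1},\Psi_f(v)\rangle=\delta\langle\ISS^f(x)_{n,\cdot},v\rangle_k.
\]
Substituting these two observations yields exactly the stated formula for \(u\succ_f v\).

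The second identity would follow by the identical three-step scheme, now using \(\Psi_f(u\bullet_f v)=\Psi_f(u)\bullet\Psi_f(v)\) together with \cref{eq:iss.ids2} in place of \cref{eq:iss.ids1}. I expect no essential obstacle here, since the whole argument is a transport along the isomorphism \(\Psi_f\). The only point that deserves a moment's care --- and the closest thing to a subtlety --- is the commutation of the time-increment operator \(\delta\) with the translation of \Cref{prp:iss_psi}, displayed above: it is valid precisely because \(\Psi_f\) acts on the tensor (word) argument while \(\delta\) acts on the time index \(m\), so the two operations are independent and may be interchanged.
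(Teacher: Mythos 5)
Your proof is correct and follows exactly the route the paper intends: the corollary is stated as a direct consequence of \Cref{prp:iss_psi}, the quasi-shuffle morphism property of \(\Psi_f\) (immediate from the definition of \(\succ_f\) and \(\bullet_f\) in \Cref{prop:deformedqSh}), and \Cref{prop:iss.ids}, and you have merely written out the transport argument in full. The observation that \(\delta\) commutes with the translation because it acts on the time index rather than the word argument is the right (and only) point of care.
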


\begin{remark}
    Taking up on \Cref{rmk:algo}, the identities contained in \Cref{crl:issf.iter} also imply an efficient algorithm for the computation of \(\ISS^f\).
\end{remark}

\begin{example}
	Choose for instance \(f(t)=t+t^2\), then
    \begin{align*}
        \langle \ISS^f(x)_{n,m}, [\letter 3][\letter 5] \rangle
        &=
        \langle \vec{\prod_{n< j\le m}}\left( \varepsilon
            + \sum_{a\in \mathfrak A}\delta x_j^a a
        + \left( \sum_{a\in \mathfrak A}\delta x_j^a a \right)^2 \right), [\letter 3][\letter 5]  \rangle \\
        &=
        \sum_{n < i_1 < i_2 \le m}
        \delta x^{[\letter 3]}_{i_1}
        \delta x^{[\letter 5]}_{i_2}
        +
        \sum_{n < i \le m}
        \delta x^{[\letter 3]}_{i}
        \delta x^{[\letter 5]}_{i}
        \\
        &=
        \langle \ISS^f(x)_{n,m}, [\letter 3][\letter 5] + [\letter 3\letter 5] \rangle \\
        &=
        \langle \ISS(x)_{n,m}, \Psi_f( [\letter 3][\letter 5] ) \rangle.
    \end{align*}
    
    Note also the deformed quasi-shuffle computation:
    \begin{align*}
        [\letter 3]\succ_f[\letter 5] &= [\letter 3][\letter 5]-[\letter{35}]\\
        [\letter 5]\succ_f[\letter 3] &= [\letter 5][\letter 3]-[\letter{35}]\\
        [\letter 3]\bullet_f[\letter 5] &= [\letter{35}]
    \end{align*}
    so that
    \[
        [\letter 3]*_f[\letter 5]=[\letter 3][\letter 5]+[\letter 5][\letter 3]-[\letter{35}].
    \]
    Compare this with the undeformed quasi-shuffle product
    \[
        [\letter 3]*[\letter 5]=[\letter 3][\letter 5]+[\letter 5][\letter 3]+[\letter{35}].
    \]
\end{example}

\subsubsection{Relation to F.~Király and H.~Oberhauser}
\label{sss:KO}

We relate our results to the higher-order discrete signatures introduced by F.~Király and H.~Oberhauser \cite[Definition B.4]{KiralyOberhauser2019}.
Given an integer \(p\ge 1\), these authors define for a \(d\)-dimensional time series $x$ the map \(\DS^+_{(p)}(x)\in T(({\R^d}))\) by
\begin{equation}
    \DS^+_{(p)}(x)=\vec{\prod_{0< i\le N}}\sum_{j=0}^p\frac{\left( \mathbf x_i \right)^{\otimes j}}{j!}.
    \label{eq:KO}
\end{equation}
Here $\mathbf{x}_i = \sum_{\letter{j} \in A} \delta x_i^{(j)} [\letter{j}]$ and
$(\mathbf x_i)^{\otimes n}=\sum_{\letter{j}_1,\ldots,\letter{j}_n \in A} \delta x_i^{(j_1)} \cdots \delta x_i^{(j_n)} [\letter{j}_1] \cdots [\letter{j}_n]$.
We note that this map, considered as a linear map on $T(\R^d)$, is not an algebra morphism over
\emph{any} product defined on \(T({\R^d})\) that is compatible with the grading.
Indeed, suppose there is such a product and denote it by \(\circledast\).
Consider the map \(\DS^+_{(p)}(x)\) over a single time step with a non-zero increment.
Fix moreover a single symbol, say \(\letter 1\in A\).
Then
\[
    (\delta x_0^{(1)})^{p+1}=\langle [\letter 1^{\circledast(p+1)}],\DS_{(p)}^+(x)\rangle = 0
\]
which is a contradiction.
This can be resolved by considering the infinite-dimensional polynomial extension of the time series,
including all powers of increments \cite[Example B.2]{TBO2020}.
In essence, this is what the quasi-shuffle approach does
-- the extension is obtained by considering
the bracket terms \([\letter i_1\dotsm\letter i_n]\in\mathfrak A\) and the corresponding extended
increments \(\delta x^{[\letter i_1\dotsm\letter i_n]}\).

However, even when considering the proper extension, the map so obtained does not yield a character
over the quasi-shuffle Hopf algebra when \(p>1\) (the case \(p=1\) corresponds to \(\ISS(x)\), see
\cref{eq:ISS}).
Taking \(p=2\), a single time step and considering the product \([\letter 1]\star[\letter 1]\)
constitutes a simple counterexample.\footnote{The reader is invited to work out the details.}
In this case, the analogue of \cref{eq:KO} consider the full extension equals \(\ISS^{f_p}(x)\), with
\(f_p=t+\frac{1}{2}t^2+\dotsb+\frac{1}{p!}t^p\); \Cref{crl:twchar} restores the character property
of this map, with respect to a \emph{different} product.
Moreover, we have that \(\DS^+_{(p)}(x)=\pi(\ISS^{f_p}(x))\) where \(\pi\colon H'\to H_1'\) is the unique concatenation
morphism satisfying \(\pi([\letter i])=[\letter i]\) and \(\pi([\letter i_1\dotsm\letter i_n])=0\) if \(n>1\).

Finally, we mention that in the limit \(p\to\infty\), \(\DS_{(\infty)}^+(x)\) coincides with the
iterated-integrals signature of the path \(X\) interpolating the values of \(x\) piecewise linearly with
unit speed.
In the same way, the extended version \(\ISS^{f_\infty}(x)\) coincides with the iterated-integrals
signature of an infinite-dimensional extension of \(X\) \cite[Theorem 5.3]{DET2020}.
Both statements are consistent with the fact that \(f_\infty(t)=\exp(t)-1\), so that
\(\Psi_{f_\infty}\) is the Hoffman exponential and \(\star_{f_\infty}\) becomes the shuffle
product (over \(A\) and \(\mathfrak A\), respectively).


\subsection{Polynomial transformation of the time series' increments}
\label{ssec:vecttrans}

\newcommand\ISSPInc[1]{\ISS^{#1(\mathsf{inc})}} 

In some applications, one might have only access to observables of a time series and not to the time
series itself. In others, such as in Machine Learning, applying nonlinearities to the data might be
of use. We introduce now an analogue of the iterated-sums signature, acting on transformed data.
In the following, we will work with different base alphabets, so we explicitly include the size of
it in the notation.
Hence from now on we write e.g. \(\Hqsh(\R^d)\) to indicate this.

Let \(P\colon\R^d\to\R^e\) be a polynomial with \(P(0)=0\), and fix a \(d\)-dimensional time series
\(x\).
We are interested in describing the algebraic properties of the iterated-sums signature of the
transformed increments \((P(\delta x_0),\dotsc,P(\delta x_{N-1}))\).
That is, we wish to study the map
\index[general]{ISSPinc@$\ISSPInc{P}$}
\begin{align}
    \label{eq:Pinc}
        \langle\ISSPInc{P}(x)_{n,m},a_1\dotsm a_p\rangle
        \coloneqq\sum_{n< i_1<\dotsb<i_p\le m}P(\delta x_{i_1})^{a_1}\dotsm P(\delta x_{i_p})^{a_p}.
\end{align}
It is immediate from the definition that, as a word series, \(\ISSPInc{P}_{n,m}(x)\) admits the
factorization
\[
    \ISSPInc{P}(x)_{n,m}=\vec{\prod_{n< j\le m}}\left( \varepsilon+\sum_{a\in\mathfrak A}P(\delta x_j)^a a \right).
\]

In particular we have
\begin{proposition}
        For \(0\le n\le n'\le n''\le N\),
        the identity
        \[
                \ISSPInc{P}(x)_{n,n'}\ISSPInc{P}(x)_{n',n''}=\ISSPInc{P}(x)_{n,n''}
        \]
        holds.
\end{proposition}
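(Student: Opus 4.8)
The plan is to read off Chen's identity from the time-ordered factorization, exactly as was done for \(\ISS\) and \(\ISS^f\): the statement is nothing but the abstract shadow of the associativity of the convolution product on \(\Hqsh'\). First I would recall that the convolution product \(\mathrm R\mathrm T=(\mathrm R\otimes\mathrm T)\circ\Delta_{\scriptscriptstyle{\text{dec}}}\) is associative and has the counit \(\varepsilon\) as its two-sided unit, since \(\langle\varepsilon,u\rangle\) is nonzero only for \(u=\e\). Writing \(M_j\coloneqq\varepsilon+\sum_{a\in\mathfrak A}P(\delta x_j)^a a\) for the single-step factor, the factorization displayed just above the proposition exhibits
\[
    \ISSPInc{P}(x)_{n,m}=\vec{\prod_{n<j\le m}}M_j=M_{n+1}M_{n+2}\dotsm M_m,
\]
the product being taken in this convolution algebra and the empty product (the case \(m=n\)) being equal to the unit \(\varepsilon\), consistently with \(\ISSPInc{P}(x)_{n,n}=\varepsilon\).

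Given this, the proof is a one-liner: splitting the ordered product at the intermediate index \(n'\) and using associativity of convolution,
\[
    \ISSPInc{P}(x)_{n,n'}\,\ISSPInc{P}(x)_{n',n''}
    =(M_{n+1}\dotsm M_{n'})(M_{n'+1}\dotsm M_{n''})
    =M_{n+1}\dotsm M_{n''}
    =\ISSPInc{P}(x)_{n,n''}.
\]
Alternatively, if one prefers to argue directly from \cref{eq:Pinc} without invoking the factorization, I would fix a word \(w=a_1\dotsm a_p\) and partition the index set in
\[
    \langle\ISSPInc{P}(x)_{n,n''},a_1\dotsm a_p\rangle=\sum_{n<i_1<\dotsb<i_p\le n''}P(\delta x_{i_1})^{a_1}\dotsm P(\delta x_{i_p})^{a_p}
\]
according to the number \(r\) of indices satisfying \(i_s\le n'\). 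Because the \(i_s\) are strictly increasing there is, for each \(0\le r\le p\), a unique such split, and the corresponding summand factors as \(\langle\ISSPInc{P}(x)_{n,n'},a_1\dotsm a_r\rangle\,\langle\ISSPInc{P}(x)_{n',n''},a_{r+1}\dotsm a_p\rangle\); summing over \(r\) reproduces exactly \(\langle\ISSPInc{P}(x)_{n,n'}\ISSPInc{P}(x)_{n',n''},w\rangle\) by the deconcatenation formula for the convolution product.

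I do not expect any genuine obstacle here. The only point that warrants a line of care is the justification of the factorization itself together with the observation that \(\varepsilon\) is the convolution unit; once these are in place, Chen's identity is insensitive to which scalars are fed into the factors, which is precisely why replacing \(\delta x_j^a\) by \(P(\delta x_j)^a\) changes nothing. In particular the polynomial \(P\) — and the hypothesis \(P(0)=0\), whose only role is to ensure that the transformed data is again an increment-type series — plays no part in the argument: the identity is purely a statement about the time-index bookkeeping of the ordered product, and holds verbatim over the alphabet of size \(e\) in which \(\ISSPInc{P}(x)\) takes its values.
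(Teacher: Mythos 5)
Your proof is correct and follows the same route the paper takes: the paper offers no separate argument, deducing Chen's identity directly ("In particular we have") from the time-ordered factorization of \(\ISSPInc{P}(x)_{n,m}\) in the convolution algebra, which is exactly your main argument. Your alternative direct computation via splitting the strictly increasing index tuple at \(n'\), and your remark that \(P(0)=0\) plays no role here, are both correct additions but not needed.
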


\newcommand\diamondH[1]{\Phi^{#1}}
\newcommand\fDiamondH{\diamondH{P}}

Since \(P\) vanishes at 0, the entries of \(\ISSPInc{P}(x)\) are invariant to time-warping.
Therefore, since the iterated-sums signature contains \emph{all} such invariants, we are guaranteed to be
able to express all said entries in terms of those in \(\ISS(x)\).
In order to describe this relation, we consider a map \(\diamondH{P}\colon H(\R^e)\to H(\R^d)\) induced
by \(P\).
Recall that for a multi-index \(\nu=(\nu_1,\dotsc,\nu_d)\in\N_0^d\), and \(x=(x^{(1)},\dotsc,x^{(d)})\in\R^d\) one writes
\(|\nu|\coloneq\nu_1+\dotsb+\nu_d\) and
\[
    x^\nu\coloneq(x^{(1)})^{\nu_1}\dotsm(x^{(d)})^{\nu_d}.
\]
Now, expressing each component of \(P=(p_1,\dotsc,p_e)\) as
\begin{equation}
\label{eq:poly.def}
    p_j(x) = \sum_{|\nu|\le\deg p_j}p_{j;\nu} x^\nu
\end{equation}
we set
\[
    p_\diamond(\letter j)\coloneq\sum_{|\nu|\le\deg p_j}p_{j;\nu}[\letter 1^{\nu_1}\dotsm\letter d^{\nu_d}]
\]
for all \(\letter j\in B=\{\letter 1,\dotsc,\letter e\}\).
Note that this expression can be more succinctly written as
\begin{equation}
\label{eq:nua}
    p_\diamond(\letter j)=\sum_{\substack{a\in\mathfrak A|a|\le\deg p_j}}p_{j;\nu(a)}a
\end{equation}
with \( \nu_j([\letter i_1\dotsm\letter i_n])=\#\{k:\letter i_k=\letter j\} \).

This map extends uniquely to a morphism of commutative algebras, \(p_\diamond\colon S(B)\to S(A)\).
It then has a unique extension $\fDiamondH$
\index[general]{fDiamondH@$\fDiamondH$}
to all of \(H\) as a \emph{concatenation} morphism, i.e.~if
\(w=b_1\dotsm b_m\in\mathfrak B^*\) then
\begin{equation}
    \label{eq:fDiamondH}
    \fDiamondH(w)=p_\diamond(b_1)\dotsm p_\diamond(b_m).
\end{equation}

\begin{example}

    Let $P\colon\R^2 \to \R^3,$ \( P = (p_1, p_2, p_3) = ((x^{(1)})^2, (x^{(2)})^3, x^{(1)} (x^{(2)})^2) \).
    Then
    \begin{align*}
        \diamondH{P}\left( [\letter1] \right)              &= [\letter 1^2] \\
        \diamondH{P}\left( [\letter3^2] \right)            &= [\letter 1^2 \letter 2^4] \\
        \diamondH{P}\left( [\letter 1][\letter3^2] \right) &= [\letter 1^2] [\letter 1^2 \letter 2^4].
    \end{align*}
\end{example}

\begin{lemma}
    The map \(\fDiamondH\colon H_{\qsh}(\R^e)\to H_{\qsh}(\R^d)\) is a quasi-shuffle morphism in the sense of \Cref{def:qshmorph}.
    Moreover, it is a morphism of Hopf algebras.
    \label{lem:fdqsh}
\end{lemma}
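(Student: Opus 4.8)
The plan is to first establish the quasi-shuffle morphism property (in the sense of \Cref{def:qshmorph}) and then upgrade it to a morphism of Hopf algebras. The two structural facts I would use repeatedly are that $\fDiamondH$ is, by construction, a \emph{concatenation} morphism, so that $\fDiamondH(Wc)=\fDiamondH(W)\,p_\diamond(c)$ for every $W\in H(\R^e)$ and $c\in S(B)$, and that $p_\diamond\colon S(B)\to S(A)$ is a morphism of commutative algebras for the bracket products, so that $p_\diamond([bc])=[p_\diamond(b)\,p_\diamond(c)]$.

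For the quasi-shuffle morphism property I would invoke Loday's theorem (\Cref{thm:loday}): the non-unital space $\Hqsh^+(\R^e)$ is the free commutative quasi-shuffle algebra generated by the single letters $\{[\letter j]:\letter j\in B\}$. Hence the assignment $[\letter j]\mapsto p_\diamond(\letter j)$ extends uniquely to a quasi-shuffle morphism $\Lambda\colon\Hqsh^+(\R^e)\to\Hqsh^+(\R^d)$, and it then suffices to identify $\Lambda$ with $\fDiamondH$ on the basis $\mathfrak A^*$. I check this in two steps. First, on a single bracket I use $[\letter j_1\dotsm\letter j_n]=[\letter j_1]\bullet\dotsm\bullet[\letter j_n]$ together with $c\bullet d=[cd]$ for $c,d\in S(A)$ (which is \eqref{eq:qshrec} with empty prefixes); since $\Lambda$ preserves $\bullet$ and $p_\diamond$ is an algebra morphism, $\Lambda([\letter j_1\dotsm\letter j_n])=p_\diamond(\letter j_1)\bullet\dotsm\bullet p_\diamond(\letter j_n)=p_\diamond([\letter j_1\dotsm\letter j_n])=\fDiamondH([\letter j_1\dotsm\letter j_n])$. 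Second, writing an arbitrary word as $s_1\dotsm s_k=(\dotsm(s_1\succ s_2)\dotsm)\succ s_k$, using that $\Lambda$ preserves $\succ$, together with $W\succ d=Wd$ for $d\in S(A)$ (again \eqref{eq:qshrec}, with $v=\e$), I obtain $\Lambda(s_1\dotsm s_k)=p_\diamond(s_1)\dotsm p_\diamond(s_k)=\fDiamondH(s_1\dotsm s_k)$. Thus $\Lambda=\fDiamondH$, whence $\fDiamondH$ is a quasi-shuffle morphism. Alternatively, one can prove $\fDiamondH(u\succ v)=\fDiamondH(u)\succ\fDiamondH(v)$ and $\fDiamondH(u\bullet v)=\fDiamondH(u)\bullet\fDiamondH(v)$ directly by a joint induction on $\ell(u)+\ell(v)$, feeding the recursions \eqref{eq:qshrec} and \eqref{qshproduct} into one another: the $\succ$-step reduces to $\star$ at length one less, the $\bullet$-step to $\star$ at length two less, and $\star$ is recovered from $\succ$ and $\bullet$ at the same length via \eqref{qshproduct}.

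For the Hopf-algebra statement, the algebra morphism property for the quasi-shuffle product $\star$ is immediate from the quasi-shuffle morphism property, and $\fDiamondH(\e)=\e$ shows that units are preserved. It remains to check that $\fDiamondH$ is a morphism of coalgebras. Since $p_\diamond(b)\in S(A)$ lies in the tensor-length-one component, $\fDiamondH$ sends a word of length $m$ to a linear combination of words of length $m$ and acts slot-by-slot; consequently deconcatenation commutes with it, $\Delta_{\scriptscriptstyle{\text{dec}}}\circ\fDiamondH=(\fDiamondH\otimes\fDiamondH)\circ\Delta_{\scriptscriptstyle{\text{dec}}}$, as one sees by splitting $p_\diamond(b_1)\dotsm p_\diamond(b_m)$ at each of the $m+1$ positions and matching with the image under $\fDiamondH\otimes\fDiamondH$ of $\Delta_{\scriptscriptstyle{\text{dec}}}(b_1\dotsm b_m)$. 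The counit is preserved because $\fDiamondH$ maps $H^+(\R^e)$ into $H^+(\R^d)$.

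Finally, $\fDiamondH$ is then a bialgebra morphism between the connected graded Hopf algebras $\Hqsh(\R^e)$ and $\Hqsh(\R^d)$, and it is a general fact (already used for \Cref{prop:psif}) that a bialgebra morphism between connected graded Hopf algebras automatically commutes with the antipodes; hence $\fDiamondH$ is a morphism of Hopf algebras. The main obstacle is the quasi-shuffle morphism property: one must reconcile the purely concatenative definition of $\fDiamondH$ with the recursive half-shuffle and bracket structure underlying $\star$, and this is precisely what the identification $\Lambda=\fDiamondH$ (or the joint induction) is designed to accomplish.
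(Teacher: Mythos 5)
Your proof is correct, but your primary route differs from the paper's. The paper verifies the quasi-shuffle morphism property of $\Phi^P$ head-on: preservation of $\bullet$ is immediate from $p_\diamond$ being an algebra morphism, and preservation of $\succ$ is proved by induction on the length of the right argument, feeding the recursion $u\succ vb=(u\star v)b$ and the decomposition $\star={\succ}+{\succ^{\mathrm{op}}}+{\bullet}$ into one another --- exactly the ``joint induction'' you sketch as your alternative. Your main argument instead invokes freeness (\Cref{thm:loday}) to produce the unique quasi-shuffle morphism $\Lambda$ extending $[\letter j]\mapsto p_\diamond(\letter j)$, and then identifies $\Lambda$ with $\Phi^P$ on the basis $\mathfrak B^*$ via $[\letter j_1\dotsm\letter j_n]=[\letter j_1]\bullet\dotsm\bullet[\letter j_n]$ and $s_1\dotsm s_k=(\dotsm(s_1\succ s_2)\dotsm)\succ s_k$; this is valid and has the merit of running in parallel with the construction of $\Lambda_P$ in \Cref{thm:polymap2}, making transparent that $\Phi^P$ is \emph{the} universal extension of $p_\diamond$, whereas the paper's induction is self-contained and does not lean on Loday's theorem. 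For the coalgebra half the two arguments essentially coincide: the paper reduces to primitivity of $p_\diamond([\letter j])\in S(A)$ (strictly one should note the same holds for every bracket $a\in\mathfrak B$, which is immediate since $p_\diamond(a)\in S(A)$), while you observe directly that $\Phi^P$ acts slot-by-slot and preserves tensor length, so deconcatenation commutes with it. Your closing appeal to the standard fact that a bialgebra morphism of connected graded Hopf algebras respects antipodes supplies a detail the paper leaves implicit.
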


\begin{proof}
    By definition, \(\fDiamondH\) preserves the \(\bullet\) product in \(H_{\qsh}\).
    Since it is a concatenation morphism, it must also preserve the half-shuffle \(\succ\).
    Indeed, for \(u\in\mathfrak B^*\) and \(b\in\mathfrak B\) we have that
    \[
        \fDiamondH(u\succ b)=\fDiamondH(ub)=\fDiamondH(u)\fDiamondH(b)=\fDiamondH(u)\succ \fDiamondH(b).
    \]
    By induction, if \(u,v\in\mathfrak B^*\) and \(b\in\mathfrak B\) we have that
    \begin{align*}
        \fDiamondH(u\succ vb)&= \fDiamondH(u\star v)\fDiamondH(b)\\
        &= \fDiamondH(u\succ v+v\succ u+v\bullet u)\fDiamondH(b)\\
        &= \Bigl( \fDiamondH(u)\succ \fDiamondH(v)+\fDiamondH(v)\succ \fDiamondH(u)+\fDiamondH(v)\bullet \fDiamondH(u)
        \Bigr)\fDiamondH(b)\\
        &= (\fDiamondH(u)\star \fDiamondH(v))\fDiamondH(b)\\
        &= \fDiamondH(u)\succ \fDiamondH(v)\fDiamondH(b)\\
        &= \fDiamondH(u)\succ \fDiamondH(vb).
    \end{align*}
    Thus, \(\fDiamondH\) is a quasi-shuffle morphism, and in particular an algebra morphism.

    In order to check that it is also a coalgebra morphism, it suffices to prove the property for \(\letter j\in B\).
    In this case we have that
    \[
        \Delta_{\scriptscriptstyle{\text{dec}}} \fDiamondH([\letter j])=\sum_{|\nu|\le\deg f_j}p_{j;\nu}([\letter 1^{\nu_1}\dotsm\letter
        d^{\nu_d}]\otimes\mathbf e+\mathbf e\otimes[\letter 1^{\nu_1}\dotsm\letter d^{\nu_d}])=\fDiamondH(\letter
        j)\otimes\mathbf e+\mathbf e\otimes \fDiamondH(\letter j).
    \]

\end{proof}

\begin{remark}
    Observe that for \(P\colon\R^e\to\R^f\) and \(Q\colon\R^d\to\R^e\) the composition rule
    \(\diamondH{P\circ Q}=\diamondH{Q}\circ \diamondH{P}\) holds.
    Indeed, for a single letter \(\letter j\in C\coloneq\{\letter 1,\dotsc,\letter p\}\) we see that (using the notation
    introducing in \cref{eq:nua})
    \begin{align*}
        \diamondH{Q}\circ \diamondH{P}([\letter j])
        &= \diamondH{Q}\left( \sum_{a\in\mathfrak A}p_{j;\nu(a)}a \right)\\
        &= \sum_{a=[\letter i_1\dotsm\letter i_n]\in\mathfrak A}p_{j;\nu(a)}[q_\diamond(\letter i_1)\dotsm q_\diamond(\letter i_n)]\\
        &= \sum_{a=[\letter i_1\dotsm\letter i_n]\in\mathfrak A}p_{j;\nu(a)}\prod_{k=1}^n\sum_{b_k\in\mathfrak A}q_{i_k;\nu(b_k)}[b_1\dotsm b_n].
    \end{align*}
    It is straightforward to check, using \cref{eq:poly.def}, that the coefficient of a single letter in the last expression equals the
    coefficient of the corresponding monomial in \(P\circ Q\).
\end{remark}

\begin{theorem}
    Let \(P\colon\R^d\to\R^e\) be a polynomial with vanishing constant coefficient.
    For all \(w\in H(\R^e)\), the relation
    \[
        \langle\ISSPInc{P}(x)_{n,m},w\rangle=\langle\ISS(x)_{n,m},\diamondH{P}(w)\rangle
    \]
    holds.
    In particular, \(\ISSPInc{P}(x)\) is a quasi-shuffle character of \(H_{\qsh}(\R^e)\).
    \label{thm:polymap}
\end{theorem}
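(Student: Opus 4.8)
The plan is to use Chen's property to reduce the identity to a single time step, and then to verify that one-step identity by recognizing evaluation at an increment as a morphism of the commutative algebra $(S(A),\bullet)$. To begin, I would note that both sides satisfy Chen's identity for the convolution product on $H'$: for the left-hand family this is the Proposition established immediately above, while for the right-hand family $w\mapsto\langle\ISS(x)_{n,m},\fDiamondH(w)\rangle$ it follows from \Cref{lem:fdqsh}, since $\fDiamondH$ is a coalgebra morphism and hence precomposition with it intertwines $\Delta_{\scriptscriptstyle{\text{dec}}}$ and therefore commutes with convolution; Chen's identity for $\ISS(x)$ (\Cref{thm:issprop}) then transports to the composite. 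Both families are thus recovered from their one-step factors via the time-ordered product, so it suffices to compare $\varepsilon+\sum_{a\in\mathfrak A}P(\delta x_j)^a a$ paired with $w$ against $\varepsilon+\sum_{a\in\mathfrak A}\delta x_j^a a$ paired with $\fDiamondH(w)$, for a single step $j$.

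By linearity it is enough to treat a basis word $w=b_1\dotsm b_p\in\mathfrak B^*$. The one-step factor pairs nontrivially only with the empty word and with single symbols of $\mathfrak A$; since $\fDiamondH$ maps a length-$p$ word to a length-$p$ element of $H(\R^d)$ (each $p_\diamond(b_k)$ lies in $S(A)$), both sides vanish unless $p\in\{0,1\}$. The case $p=0$ is trivial. For $p=1$, writing $w=b=[\letter i_1\dotsm\letter i_n]\in\mathfrak B$, the left-hand side equals $P(\delta x_j)^{b}=p_{i_1}(\delta x_j)\dotsm p_{i_n}(\delta x_j)$, whereas the right-hand side equals $\sum_{a\in\mathfrak A}\delta x_j^a\langle a,p_\diamond(b)\rangle$.

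The crux, and the only genuinely computational step, is to evaluate this last sum. I would introduce the functional $\operatorname{ev}_z\colon S(A)\to\R$ determined by $\operatorname{ev}_z([\letter k_1\dotsm\letter k_r])=z^{(k_1)}\dotsm z^{(k_r)}$, and observe that, because evaluating a symmetric tensor at a point respects the symmetric product, $\operatorname{ev}_z$ is a morphism of commutative algebras $(S(A),\bullet)\to\R$. By the bracket convention \eqref{eq:bracketext}, the right-hand side above is precisely $\operatorname{ev}_{\delta x_j}(p_\diamond(b))$. Now $\operatorname{ev}_{\delta x_j}\circ p_\diamond\colon S(B)\to\R$ is a composite of algebra morphisms, hence itself an algebra morphism, and it sends each letter $\letter i$ to $\sum_\nu p_{i;\nu}(\delta x_j)^\nu=p_i(\delta x_j)$ by \eqref{eq:poly.def}. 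Evaluating it on $b=[\letter i_1]\bullet\dotsm\bullet[\letter i_n]$ therefore gives $p_{i_1}(\delta x_j)\dotsm p_{i_n}(\delta x_j)$, matching the left-hand side and completing the one-step identity; together with the Chen reduction this proves the main relation.

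Finally, the character claim is a formal consequence. Composing the quasi-shuffle character $\ISS(x)_{n,m}$ (\Cref{thm:issprop}) with the quasi-shuffle morphism $\fDiamondH$ (\Cref{lem:fdqsh}) gives, for all $u,v\in H_{\qsh}(\R^e)$, $\langle\ISSPInc{P}(x)_{n,m},u\star v\rangle=\langle\ISS(x)_{n,m},\fDiamondH(u)\star\fDiamondH(v)\rangle=\langle\ISSPInc{P}(x)_{n,m},u\rangle\langle\ISSPInc{P}(x)_{n,m},v\rangle$, as required. The main obstacle is the morphism identity of the third paragraph; the Chen-based reduction and the character deduction are routine once \Cref{lem:fdqsh} is in hand.
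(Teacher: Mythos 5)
Your proof is correct, but it takes a genuinely different route from the paper's. The paper argues by induction on the length of the word: it first verifies the identity on single letters of \(B\) by direct computation, then on general brackets in \(S(B)\) using that \(p_\diamond\) is a morphism of commutative algebras, and finally extends to arbitrary words \(a_1\dotsm a_p\) by unwinding the defining recursion of \(\ISS\) (\Cref{def:ISS}) one symbol at a time; the coalgebra-morphism property of \(\Phi^P\) is never invoked for the main identity, only for the character claim at the end. You instead induct in the time direction: Chen's identity on both sides (which for the right-hand side does require the coalgebra half of \Cref{lem:fdqsh}) reduces everything to a single time step, where the length filtration forces both pairings to vanish except on words of length at most one, and the surviving case is precisely the statement that evaluation at \(\delta x_j\) composed with \(p_\diamond\) is an algebra morphism \(S(B)\to\R\) sending \([\letter i]\) to \(p_i(\delta x_j)\). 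This reduction is the same device the paper uses to prove \Cref{prp:iss_psi}, so it is very much in the spirit of the text; it buys a conceptually cleaner one-step verification at the price of using the coalgebra structure up front, whereas the paper's induction is more elementary, needing only the algebra-morphism property of \(p_\diamond\) and the recursive definition of the signature. Both arguments deduce the character property identically from \Cref{lem:fdqsh}.
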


\begin{proof}
    We first prove the identity on \(S(B)\). For this, we first show it for \(\letter i\in B\).
    Observe that
    \begin{align*}
        \langle\ISS(x)_{n,m},p_\diamond(\letter i)\rangle&= \sum_{|\nu|\le\deg p_i} p_{i;\nu}\langle \ISS(x)_{n,m},[\letter 1^{\nu_1}\dotsm\letter d^{\nu_d}]\rangle\\
        &= \sum_{|\nu|\le\deg p_i}p_{i;\nu}\sum_{j=n}^{m-1}(\delta x_j^{(1)})^{\nu_1}\dotsm(\delta x_j^{(d)})^{\nu_d}\\
        &= \sum_{n<j\le m}p_i(\delta x_j)\\
        &= \langle \ISSPInc{P}(x)_{n,m},[\letter i]\rangle.
    \end{align*}
    If now \(a=[\letter i_1\dotsm\letter i_p]\in\mathfrak B\) we see that
    \begin{align*}
        \langle\ISS(x)_{n,m},\diamondH{P}(a)\rangle
        &= \langle\ISS(x)_{n,m},[p_\diamond(\letter i_1)\dotsm p_\diamond(\letter i_p)]\rangle\\
        &= \sum_{n<j\le m} p_{i_1}(\delta x_j)\dotsm p_{i_p}(\delta x_j)\\
        &= \langle\ISSPInc{P}(x)_{n,m},a\rangle.
    \end{align*}
    By linearity, the identity holds for all \(a\in S(B)\).

    Finally, by definition, if \(w=a_1\dotsm a_p\in\mathfrak B^*\) then
    \begin{align*}
        \langle\ISS(x)_{n,m},\diamondH{P}(w)\rangle
        &= \langle\ISS(x)_{n,m},\diamondH{P}(a_1\dotsm
        a_{p-1})\diamondH{P}(a_p)\rangle\\
        &= \sum_{n<j\le m} \langle\ISS(x)_{n,j-1},\diamondH{P}(a_1\dotsm a_{p-1})\rangle P(\delta
        x_j)^{a_p}\\
        &= \sum_{n<j\le m} \langle\ISSPInc{P}(x)_{n,j-1},a_1\dotsm a_{p-1}\rangle P(\delta
        x_j)^{a_p}\\
        &= \langle\ISSPInc{P}(x)_{n,m},a_1\dotsm a_p\rangle.
    \end{align*}

    The quasi-shuffle property follows immediately from \Cref{lem:fdqsh}.
\end{proof}

\begin{example}
    In \(d=2\), consider the real-valued polynomial map \(P(x,y)=x^2+y^2=\|(x,y)\|^2\).
    This map can be of interest in a data science context, where one is interested only in the distance of the data
    points to the origin instead of their absolute position in the plane, e.g. if the problem has some rotational
    invariance properties.
We then get the map (with \(e=1\)) \(p_\diamond(\letter 1)=[\letter 1^2]+[\letter 2^2]\). Finally,
    \begin{align*}
        \left\langle \ISSPInc{P}(x)_{n,m}, [\letter 1][\letter 1]\right\rangle&= \sum_{n<i_1<i_2\le m}\left(\left(\delta
    x_{i_1}^{[\letter 1]}\right)^2+\left(\delta x_{i_1}^{[\letter 2]}\right)^2\right)\left(\left(\delta
    x_{i_2}^{[\letter 1]}\right)^2+\left(\delta x_{i_2}^{[\letter 2]}\right)^2\right)\\
    &= \left\langle \ISS(x)_{n,m},([\letter 1^2]+[\letter 2^2])([\letter 1^2]+[\letter 2^2])\right\rangle\\
    &= \left\langle \ISS(x)_{n,m},\diamondH{P}([\letter 1][\letter 1])\right\rangle,
    \end{align*}
    where in the last equality we have used \cref{eq:fDiamondH}.
\end{example}
\begin{remark}
    Even if $d=e$, the map \(\diamondH{P}\) is, in general, not invertible.
    This is due to the fact that \(P^{-1}\) will in general be a formal power series and not just a
    polynomial.
    As an example, consider the single-variable polynomial \(P(x)=x+x^3\).
    Its inverse satisfies \(P^{-1}(x)=x-x^3+3x^5-12x^7+\operatorname{o}(x^7)\) but does not admit a finite series
    representation.
    This means that recovering \(\ISS(x)\) from \(\ISSPInc{P}(x)\) by performing a finite number of operations
    is not always possible. In particular, to recover the increment \(\langle\ISS(x),[\letter 1]\rangle=x_N-x_0\) from
    knowledge of \(\ISSPInc{P}(x)\) one needs to evaluate the series
    \[
        \begin{multlined}
        \langle \ISSPInc{P}(x)_{0,N},[\letter 1]\rangle
        		-3\langle \ISSPInc{P}(x)_{0,N},[\letter 1^3]\rangle\\
		+3\langle \ISSPInc{P}(x)_{0,N},[\letter 1^5]\rangle 
		- 12\langle \ISSPInc{P}(x)_{0,N},[\letter 1^7]\rangle +\dotsb
    \end{multlined}
    \]
    which might not converge depending on the size of the increment \(x_N-x_0\).
    We can say that there is a ``loss of information'', in terms of time-warping invariance, if we are
    only allowed to observe some polynomial transformation of the increments of the data instead of the increments
    themselves.
    
\end{remark}

\subsection{Polynomial transformation of the time series}

We now consider polynomial transformations of the time series itself.
Let \(P\colon\R^d\to\R^e\) be a polynomial map, for some \(e\ge 1\).
We write \( P = (p_1,\dotsc,p_e)\) where \(p_k\in\R[x^{(1)},\dotsc,x^{(d)}]\) is a multivariate polynomial.

Recall from \Cref{sec:quasi-shuffle} that the quasi-shuffle algebra \(H_{\mathrm{qsh}}\) carries a
commutative quasi-shuffle structure.
Moreover, by \Cref{thm:loday} it realizes the free commutative quasi-shuffle over \(\R^d\); in other words, if
\(\tilde H\) is any other commutative quasi-shuffle algebra and \(\Lambda\colon A\to \tilde H\) is a map,
there exists a unique extension \(\Lambda\colon H_{\mathrm{qsh}}\to \tilde H\) respecting the
corresponding quasi-shuffle structures.

Given a time series \(x\), we consider its transform \(X=P(x)\coloneq(P(x_0),\dotsc,P(x_N))\), which
is an \(\R^e\)-valued time series.
Interestingly enough, the iterated-sums signature of \(X\) can be computed just by knowing that of
the untransformed data \(x\).
More precisely we have (cf. \cite[Theorem 2]{CP2020})
\begin{theorem}
	Let $P\colon \R^d \to \R^e,$ be a polynomial map without constant term, i.e., with $P(0) = 0$.
	Given a \(d\)-dimensional time series \(x\) with \(x_0=0\), define the \(e\)-dimensional time series \(X\coloneq
	P(x)\).
	Then, for all \(0\le k\le N\), \(w\in H_{\qsh}\).
  \begin{equation}
    \label{eq:anotherPolynomialMap}
    \langle \ISS(X)_{0,k}, w \rangle = \langle \ISS(x)_{0,k}, \Lambda_P( w ) \rangle,
  \end{equation}
  where $\Lambda_P\colon H_{\mathrm{qsh}}(\R^e)\to H_{\mathrm{qsh}}(\R^d)$ 
  \index[general]{LambdaP@$\Lambda_P$}
	is the unique quasi-shuffle morphism (in the sense of \Cref{def:qshmorph}), determined by its action on $[\letter1], \dots, [\letter{e}]$ as
  \begin{equation*}
    \Lambda_P\left( [\letter{i}] \right) \coloneq \iota( p_i ) \in \Hqsh(\R^d),
  \end{equation*}
	where on the righthand side, \(\iota\colon \R[x^{(1)},\dotsc,x^{(d)}] \to H_{\mathrm{qsh}}(\R^d)\) is the unique morphism of commutative algebras
  satisfying $\iota(x^{(i)}) = [\letter i]$.
	\label{thm:polymap2}
\end{theorem}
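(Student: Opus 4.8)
The plan is to deduce \eqref{eq:anotherPolynomialMap} from the universal property in \Cref{thm:loday} by recognizing both sides as a single $\mathfrak X_N$-valued quasi-shuffle morphism evaluated at $w$. Recall the identification of the iterated-sums signature with the quasi-shuffle morphism $\sigma(x)\colon\Hqsh(\R^d)\to\mathfrak X_N$, namely $\langle\sigma(x),w\rangle_k=\langle\ISS(x)_{0,k},w\rangle$, and write analogously $\sigma(X)\colon\Hqsh(\R^e)\to\mathfrak X_N$ for the morphism attached to the transformed series $X=P(x)$. Then the left-hand side of \eqref{eq:anotherPolynomialMap} is $\langle\sigma(X),w\rangle_k$, while the right-hand side is $\langle\sigma(x),\Lambda_P(w)\rangle_k$. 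Since $\Lambda_P$ and $\sigma(x)$ are quasi-shuffle morphisms in the sense of \Cref{def:qshmorph}, so is their composite $\sigma(x)\circ\Lambda_P\colon\Hqsh(\R^e)\to\mathfrak X_N$; and $\sigma(X)$ is one by construction. By \Cref{thm:loday}, any two quasi-shuffle morphisms out of $\Hqsh(\R^e)$ that agree on the generators $[\letter 1],\dotsc,[\letter e]$ coincide, so the entire statement reduces to checking the single-letter identity $\langle\sigma(X),[\letter i]\rangle_k=\langle\sigma(x),\Lambda_P([\letter i])\rangle_k$.

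First I would evaluate the left-hand side on a generator. Because $P(0)=0$ and $x_0=0$, the transformed series starts at $X_0=P(x_0)=0$, whence $\langle\sigma(X),[\letter i]\rangle_k=X_k^{(i)}-X_0^{(i)}=p_i(x_k)$. For the right-hand side I use $\Lambda_P([\letter i])=\iota(p_i)$ and must therefore compute $\langle\sigma(x),\iota(p_i)\rangle_k$. Since $\iota$ is a morphism from the commutative polynomial algebra into $(\Hqsh(\R^d),\star)$, it suffices to treat a single monomial $x^\nu$ and extend by linearity.

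The crux is the monomial identity $\langle\sigma(x),\iota(x^\nu)\rangle_k=x_k^\nu$, which I would establish by induction on $|\nu|$. Here the hypothesis $x_0=0$ is essential: the quasi-shuffle product on $\mathfrak X_N$ is \emph{not} pointwise multiplication but the deformed product $(u*v)_k=(u_k-u_0)(v_k-v_0)$, and only because $\langle\sigma(x),[\letter i]\rangle_0=x_0^{(i)}=0$ does the morphism property $\langle\sigma(x),u\star v\rangle_k=(\langle\sigma(x),u\rangle_k-\langle\sigma(x),u\rangle_0)(\langle\sigma(x),v\rangle_k-\langle\sigma(x),v\rangle_0)$ collapse to honest multiplication $\langle\sigma(x),u\rangle_k\langle\sigma(x),v\rangle_k$. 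Feeding $\iota(x^{(i)})=[\letter i]$ into the induction then yields $\langle\sigma(x),\iota(x^\nu)\rangle_k=(x_k^{(1)})^{\nu_1}\dotsm(x_k^{(d)})^{\nu_d}=x_k^\nu$, and linearity gives $\langle\sigma(x),\iota(p_i)\rangle_k=p_i(x_k)$, matching the left-hand side.

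I expect this collapse of the $\mathfrak X_N$-product to ordinary multiplication to be the one genuinely delicate point; everything else is a formal consequence of freeness and of the morphism properties already established. Once the single-letter identity is in hand, \Cref{thm:loday} forces $\sigma(X)=\sigma(x)\circ\Lambda_P$ as morphisms on all of $\Hqsh(\R^e)$, which is precisely \eqref{eq:anotherPolynomialMap}; the character property of $\ISS(X)$ entering implicitly throughout is guaranteed by \Cref{thm:issprop} applied to the series $X$.
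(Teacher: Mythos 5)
Your proposal is correct and follows essentially the same route as the paper: both arguments reduce, via the quasi-shuffle-morphism property of the two sides together with freeness (\Cref{thm:loday}, resp.\ \Cref{prop:iss.ids}), to checking the identity on the generators $[\letter 1],\dotsc,[\letter e]$, where the left side gives $p_i(x_k)$ and the right side evaluates $\iota(p_i)$ to $p_i(x_k)$ using the character property of $\ISS(x)_{0,k}$ and the hypothesis $x_0=0$. Your monomial induction is just an explicit unwinding of that character-property step, which the paper states in one line.
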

\begin{example}
    \label{ex:P}
    Let $P\colon\R^2 \to \R^3,$ \( P = (p_1, p_2, p_3) = ((x^{(1)})^2, (x^{(2)})^3, x^{(1)}
    (x^{(2)})^2) \).
    Then
    \begin{align*}
        \Lambda_P\left( [\letter1] \right) &= [\letter1] \star [\letter1]\\
        &= 2 [\letter{1}] [\letter{1}] + [\letter{1}^2]
        \\
        \Lambda_P\left( [\letter2] \right) &=
        [\letter2] \star [\letter2] \star [\letter2]\\
        &= 6 [\letter{2}][\letter{2}][\letter{2}] + 3 [\letter{2}^2][\letter{2}] + 3 [\letter{2}][\letter{2}^2] + [\letter{2}^3]
        \\
        \Lambda_P\left( [\letter3] \right) &=
        [\letter1] \star [\letter2] \star [\letter2]\\
                                           &=\begin{multlined}[t] 2 [\letter2][\letter2][\letter1]
        + 2 [\letter1][\letter2][\letter2]
        + 2 [\letter2][\letter1][\letter2]
        + 2 [\letter2][\letter1\letter2]
        + 2 [\letter1\letter2][\letter2]\\
        + [\letter1][\letter2^2]
        + [\letter2^2][\letter1]
        + [\letter1\letter2^2].
    \end{multlined}
    \end{align*}
\end{example}

\begin{remark}
    We note that, in general, \(\Lambda_P\) is \emph{not} a morphism of Hopf algebras.
    For example, taking \(P\) as in \Cref{ex:P}, we see that
    \[
        \Delta_{\scriptscriptstyle{\text{dec}}}\Lambda_P([\letter 1])-(\Lambda_P\otimes\Lambda_P)\Delta[\letter1]=2\,[\letter 1]\otimes[\letter 1],
    \]
    that is, \(\Delta_{\scriptscriptstyle{\text{dec}}}\circ\Lambda_P\neq(\Lambda_P\otimes\Lambda_P)\circ\Delta_{\scriptscriptstyle{\text{dec}}}\).
\end{remark}

\begin{proof}
    Since by \Cref{prop:iss.ids} both sides of \cref{eq:anotherPolynomialMap} are quasi-shuffle morphisms, it is enough to show that it holds for letters \([\letter 1], \dotsc,[\letter e]\).
    Now, on one the hand, by definition
    \[
        \langle\ISS(X)_{0,k},[\letter i]\rangle
        = \sum_{0 < j \le k} \delta X_j^{(i)}
        = X^{(i)}_k = p_i(x_k).
    \]
    On the other hand, by definition
    \begin{align*}
        \langle\ISS(x)_{0,k},\Lambda_P([\letter i])\rangle&= \langle\ISS(x)_{0,k},\iota(p_i)\rangle\\
                                                          &= p_i(\langle\ISS(x)_{0,k},[\letter
                                                          1]\rangle,\dotsc,\langle\ISS(x)_{0,k},[\letter d\rangle)\\
        &= p_i(x_k)
    \end{align*}
    and the proof is finished.
\end{proof}
By adding a global shift in the variables, one can also consider polynomials with non-zero constant coefficients (cf. \cite[Corollary 1]{CP2020}).
\begin{corollary}
	Let \(P\colon\R^d\to\R^e\) be a polynomial map, and \(x\) a \(d\)-dimensional time series.
	Define the \(e\)-dimensional time series \(X\coloneq P(x)\), and set \(\tilde P_{x_0}\coloneq
	P(\cdot+x_0)-P(x_0)\).
	Then, for all \(0\le k\le N\), \(w\in H_{\qsh}\).
    \begin{equation*}
      \langle \ISS(X)_{0,k}, w \rangle = \langle \ISS(x)_{0,k}, \Lambda_{\tilde P_{x_0}}( w ) \rangle,
    \end{equation*}
\end{corollary}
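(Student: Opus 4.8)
The plan is to deduce this corollary from \Cref{thm:polymap2} by exploiting the fact that the iterated-sums signature sees a time series only through its increments. First I would record this shift-invariance explicitly: since $\delta x_j = x_j - x_{j-1}$ is unchanged upon replacing $x$ by $x + c$ for any constant $c\in\R^d$, \Cref{def:ISS} (equivalently the product formula for $\ISS$) shows that $\ISS(x)_{0,k} = \ISS(x+c)_{0,k}$ for all $k$, and the same holds for $\R^e$-valued series. This is the only structural input beyond the theorem itself.

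Next I would introduce the centred series $y \coloneq x - x_0$, so that $y_0 = 0$ and, by the shift-invariance just noted, $\ISS(y)_{0,k} = \ISS(x)_{0,k}$. The polynomial $\tilde P_{x_0}(z) = P(z + x_0) - P(x_0)$ satisfies $\tilde P_{x_0}(0) = 0$, so the pair $(y,\tilde P_{x_0})$ meets the hypotheses of \Cref{thm:polymap2}. Applying $\tilde P_{x_0}$ to $y$ yields the $\R^e$-valued series $Y \coloneq \tilde P_{x_0}(y)$ with entries
\[
    Y_k = P(y_k + x_0) - P(x_0) = P(x_k) - P(x_0) = X_k - P(x_0),
\]
so that $Y$ is precisely $X$ shifted by the constant $-P(x_0)$; by shift-invariance in $\R^e$ we then get $\ISS(Y)_{0,k} = \ISS(X)_{0,k}$.

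Finally I would chain the identities. \Cref{thm:polymap2}, applied to $y$ and $\tilde P_{x_0}$, gives
\[
    \langle \ISS(Y)_{0,k}, w \rangle = \langle \ISS(y)_{0,k}, \Lambda_{\tilde P_{x_0}}(w) \rangle,
\]
and substituting $\ISS(Y)_{0,k} = \ISS(X)_{0,k}$ on the left together with $\ISS(y)_{0,k} = \ISS(x)_{0,k}$ on the right produces exactly the asserted formula. The only point requiring care is the bookkeeping of the two independent constant shifts---one on the source series ($x\mapsto y = x - x_0$) and one on the transformed series ($X\mapsto Y = X - P(x_0)$)---and the verification that the latter shift is precisely the constant $P(x_0)$ subtracted in the definition of $\tilde P_{x_0}$; I do not expect any substantive obstacle beyond this routine check.
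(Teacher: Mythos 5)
Your proposal is correct and follows essentially the same route as the paper: center the series to $x-x_0$, apply \Cref{thm:polymap2} with the shifted polynomial $\tilde P_{x_0}$, and invoke shift-invariance of $\ISS$. The paper's own proof is a one-liner that leaves the second shift-invariance (on the $\R^e$-valued side, $X \mapsto X - P(x_0)$) implicit, whereas you spell it out; that is the only difference.
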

\begin{proof}
    The result follows from \Cref{thm:polymap2} and the fact that, if \(\tilde x=x_\cdot-x_0\) then
    \(\tilde x_0=0\) and \(\ISS(\tilde x)_{0,k}=\ISS(x)_{0,k}\) for all \(0\le k\le N\).
\end{proof}


\section{Conclusion / Outlook}
\label{sec:ConcOut}

We have investigated three ways of transforming the iterated-sums signature of a time series.
Using a formal power series $f$,
the iterative definition of the signature
is modified in \Cref{def:generalized},
to obtain a generalized signature.
Its relation to \cite{KiralyOberhauser2019} is sketched in \Cref{sss:KO}.
The transformation can be realized, on the dual side, via a Hopf algebra morphism $\Psi_f$, \Cref{prp:iss_psi}.
Given a polynomial $P$,
directly transforming the increments of the time series
leads to the signature $\ISSPInc{P}$, \eqref{eq:Pinc}.
The transformation can be realized, on the dual side, via a Hopf algebra morphism $\Phi^P$, \Cref{thm:polymap}.
Transforming the time series itself via a polynomial $P$
and calculating its usual signature can also be realized via a
Hopf algebra morphism $\Phi^P$, \Cref{thm:polymap2}.

A remark is in order regarding the three transformations, $\Psi_f$, $\Phi^P$ and $\Lambda_P$, and their properties.
Regarding coalgebra morphisms, the results in \cite{FPT2016} show that they only form a subset of all possible such
morphisms, see \Cref{rem:coalgebraEndomorphisms} and \Cref{rem:specialCase}. On the other hand, we have seen that they
are quasi-shuffle morphisms (in the sense of \Cref{def:qshmorph}), which implies that they satisfy the equations analog
to those in \Cref{prop:iss.ids}.
However, it is worth noting that not all algebra morphisms are necessarily  quasi-shuffle morphisms. Indeed, starting
from the shuffle Hopf algebra $H_\shuffle=(T(V),\shuffle,\Delta_{\scriptscriptstyle{\text{dec}}},\varepsilon,\eta)$, where $T(V)$ is the usual unital tensor
algebra over $V$ and deconcatenation as comultiplication.  We consider $\Phi: H_\shuffle \to H_\shuffle$ defined by
\begin{equation}
\label{comorphism} 
	\Phi = \text{e} + m_{\scriptscriptstyle{\text{con}}}(f \otimes \Phi)\Delta_{\scriptscriptstyle{\text{dec}}},
\end{equation}
where $\text{e}:=\eta \circ \varepsilon$ and the linear map $f$ sends the empty word to zero,  $f(\mathbf{1})=0$, and
any element of $T^+(V):=\bigoplus_{n>0} T_n(V)$ to $T_1(V)$. Here, $m_{\scriptscriptstyle{\text{con}}}$ denotes the
concatenation product on $T(V)$. Following \cite[p.214]{FPT2016}, $\Phi$ is a coalgebra morphism.
For an explicit proof via induction, one can use the fact that $(T(V),m_{\scriptscriptstyle{\text{con}}},\Delta_{\scriptscriptstyle{\text{dec}}},\varepsilon,\eta)$ forms an unital infinitesimal bialgebra \cite{Foissy2009} characterized by the identity
\begin{equation}
\label{infinitesimal} 
	\Delta_{\scriptscriptstyle{\text{dec}}}( w_1 w_2) 
	= (w_1 \otimes \mathrm{id}) \Delta_{\scriptscriptstyle{\text{dec}}}(w_2) 
		+ \Delta_{\scriptscriptstyle{\text{dec}}}(w_1) (\mathrm{id} \otimes w_2) 
		- w_1 \otimes  w_2, 
\end{equation}
for words $w_1,w_2 \in T^+(V)$. If we further assume that $f(\mathbf{1})=0=f(w_1 \shuffle w_2)$ for words $w_1,w_2 \in
T^+(V)$, then  \eqref{comorphism} defines a shuffle algebra morphism, i.e., for $w_1,w_2 \in T^+(V)$, $\Phi(w_1 \shuffle
w_2) = \Phi(w_1) \shuffle \Phi(w_2).$ Note, however, that such a $\Phi$ is not a shuffle, or Zinbiel, morphism
$$
	\Phi(xy)
	= \Phi(x \prec y) 
	= f(xy) + f(x)f(y), 
$$
which is different from $\Phi(x )\prec \Phi(y) = f(x)f(y).$ We can naturally extend the shuffle algebra morphism $\Phi$
to the quasi-shuffle Hopf algebra using Hoffman's exponential, which gives a quasi-shuffle algebra morphism on
$H_{\qsh}$, but not a quasi-shuffle morphism  (in the sense of \Cref{def:qshmorph}).
If we let $\ISS^\Phi = \ISS \circ \Phi$ be the ``transformed'' signature,
this results on $\ISS^\Phi$ \emph{not} satisfying a version \Cref{prop:iss.ids}.
We mention briefly, postponing the
presentation of details to another paper, that transforming $\ISS$ to $\ISS^\Phi$ amounts to translations in the sense
of \cite{Brunedetal2019}.

\subsection{Comparison of the three maps}
\label{ssec:compar}

We lastly consider now transformation maps.
The following is a manifestation of the nonlinear Schur--Weyl duality (see \cite[Proposition 1.3]{FPT2016}).

\begin{proposition}
    For 
    \(f\in t\mathbb R\dbra{t}\) a formal diffeomorphism
    and \(P\colon\R^d\to\R^e\) a polynomial with \(P(0)=0\),
    \begin{align*}
        \Psi_f \circ \diamondH{P} = \diamondH{P} \circ \Psi_f.
    \end{align*}
\end{proposition}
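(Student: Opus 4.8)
The plan is to reduce to basis words and then compare the two composites term by term. The guiding intuition is that \(\Psi_f\) acts only in the tensor (``horizontal'') direction, grouping and merging adjacent bracket-factors of a word via the product of \(S(A)\) with weights \(c_{i_1}\dotsm c_{i_k}\), whereas \(\diamondH{P}\) acts on each factor separately (``vertically'') through the commutative algebra morphism \(p_\diamond\); since these operations touch independent features of a word, they commute. Note that the identity is read as an equality of maps \(H(\R^e)\to H(\R^d)\): on the left \(\Psi_f\) acts on \(H(\R^d)\), on the right on \(H(\R^e)\), but both use the same coefficient sequence \((c_n)\) coming from \(f\).

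First, by linearity it suffices to verify the identity on a basis word \(w=b_1\dotsm b_m\in\mathfrak B^*\) with \(b_1,\dotsc,b_m\in\mathfrak B\). I would recall from \Cref{rem:specialCase} that \(\Psi_f\) is the coalgebra endomorphism \(\varphi_\zeta\) of \Cref{rem:coalgebraEndomorphisms} for \(\zeta(a_1\dotsm a_k)=c_k[a_1\dotsm a_k]\), so that
\[
    \Psi_f(a_1\dotsm a_n)=\sum_{I=(i_1,\dotsc,i_k)\in\mathcal C(n)}c_{i_1}\dotsm c_{i_k}\,[a_1\dotsm a_{i_1}]\dotsm[a_{i_1+\dotsb+i_{k-1}+1}\dotsm a_n],
\]
where each bracket now denotes the product in \(S(A)\) of the enclosed factors. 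The crucial observation is that this right-hand side is multilinear in the tensor factors \(a_1,\dotsc,a_n\in S(A)\); hence the same formula computes \(\Psi_f\) on any word whose factors are arbitrary (not necessarily basis) elements of \(S(A)\).

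With this in hand I would compute both composites. Applying \(\Psi_f\) first and then \(\diamondH{P}\), and using that \(\diamondH{P}\) is a concatenation morphism sending a merged bracket \([b_{l+1}\dotsm b_{l+i}]\) (a product in \(S(B)\)) to \(p_\diamond(b_{l+1}\dotsm b_{l+i})=p_\diamond(b_{l+1})\dotsm p_\diamond(b_{l+i})\) (a product in \(S(A)\)) because \(p_\diamond\) is a morphism of commutative algebras, gives
\[
    \diamondH{P}(\Psi_f(w))=\sum_{I\in\mathcal C(m)}c_{i_1}\dotsm c_{i_k}\,[p_\diamond(b_1)\dotsm p_\diamond(b_{i_1})]\dotsm[p_\diamond(b_{i_1+\dotsb+i_{k-1}+1})\dotsm p_\diamond(b_m)].
\]
On the other hand \(\diamondH{P}(w)=p_\diamond(b_1)\dotsm p_\diamond(b_m)\) is a length-\(m\) word whose factors are the elements \(p_\diamond(b_j)\in S(A)\); evaluating \(\Psi_f\) on it through the multilinear grouping formula above produces exactly the same expression, term by term over \(\mathcal C(m)\). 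Equality of the two composites follows.

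The main obstacle — in fact the only delicate point — is the bookkeeping just indicated: one must justify that \(\Psi_f\) applied to \(\diamondH{P}(w)\) is correctly evaluated by the grouping formula even though the factors \(p_\diamond(b_j)\) are not basis brackets, and that merging factors inside a bracket commutes with \(p_\diamond\). Both reduce to the single structural fact that \(p_\diamond\) is an algebra morphism, so that grouping-then-\(p_\diamond\) equals \(p_\diamond\)-then-grouping. Conceptually this is the naturality of the family \(\Psi_f\) in the coefficient algebra, i.e.\ the commutation of the functorial map \(\diamondH{P}=T(p_\diamond)\) with the natural transformation \(\Psi_f\); this is precisely the manifestation of the nonlinear Schur--Weyl duality referred to in the statement, and one could alternatively present the whole argument in that language.
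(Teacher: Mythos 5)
Your proof is correct and follows essentially the same route as the paper: the paper's one-line argument is precisely the identity \(\diamondH{P}(I[w]) = I[\diamondH{P}(w)]\) for every composition \(I\in\mathcal C(\ell(w))\), which is exactly the term-by-term commutation of grouping with \(p_\diamond\) that you verify in detail. Your extra care in justifying the evaluation of \(\Psi_f\) on words whose factors are not basis brackets is a reasonable elaboration of a point the paper leaves implicit.
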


\begin{proof}
    This follows from the fact that for any word $w$ and any $I \in C(\ell(w))$ (defined in \Cref{sss:coalg.pseries})
    we have
    \begin{align*}
        \diamondH{P}( I[w] )
        =
        I[ \diamondH{P}( w )].
    \end{align*}
\end{proof}

The polynomial transformation of the time series' increments and the polynomial transformation of the time series are related as follows.

\begin{proposition}
    We observe that for any polynomial \(P\) with \(P(0)=0\), the map \(P_\diamond: S(B) \to S(A)\) of
    \Cref{ssec:vecttrans} can be recovered from \(\Lambda_P\) by post-composing with the projection map \(\pi:H\to S(A)\).
    That is
    \begin{align*}
        P_\diamond = \pi \circ \Lambda_P.
    \end{align*}
\end{proposition}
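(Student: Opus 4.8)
The plan is to prove the identity $p_\diamond=\pi\circ\Lambda_P$ as maps $S(B)\to S(A)$ by showing that both sides are morphisms of commutative algebras for the $\bullet$ product and that they agree on the single letters $[\letter 1],\dotsc,[\letter e]$, which generate $(S(B),\bullet)$. The map $p_\diamond$ is a $\bullet$-algebra morphism by construction (\Cref{ssec:vecttrans}), so the work lies in verifying the same property for $\pi\circ\Lambda_P$ and in matching the two maps on generators.

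First I would establish that the projection $\pi\colon H\to S(A)$ is a morphism for the $\bullet$ product, i.e. $\pi(u\bullet v)=\pi(u)\bullet\pi(v)$. Decomposing $u=\sum_k u_k$, $v=\sum_l v_l$ with $u_k,v_l$ homogeneous of tensor-length $k,l$, and writing pure tensors as $u_k=u'a$, $v_l=v'b$ with $a,b\in S(A)$, the recursion \eqref{eq:qshrec} gives $u_k\bullet v_l=(u'\star v')[ab]$, whose terms all have length at least $\max(k,l)$. Hence the length-one part survives exactly for $k=l=1$, where it equals $u_1\bullet v_1=[u_1v_1]\in S(A)$; this proves the claim. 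Since $\Lambda_P$ is a quasi-shuffle morphism (\Cref{thm:polymap2}) it preserves $\bullet$, so $\pi(\Lambda_P(u\bullet v))=\pi(\Lambda_P(u))\bullet\pi(\Lambda_P(v))$ and $\pi\circ\Lambda_P$ is a $\bullet$-morphism on $S(B)$.

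The core computation is on a generator $[\letter j]$. By definition $\Lambda_P([\letter j])=\iota(p_j)$, and since $\iota$ is a morphism of commutative algebras into $(H_{\qsh}(\R^d),\star)$ with $\iota(x^{(i)})=[\letter i]$, each monomial contributes $\iota(x^\nu)=[\letter 1]^{\star\nu_1}\star\dotsm\star[\letter d]^{\star\nu_d}$. I would then prove the auxiliary identity
\[
    \pi(w\star a)=\pi(w)\bullet a\qquad(w\in H,\ a\in S(A)),
\]
by expanding $w\star a=w\succ a+a\succ w+w\bullet a$ and running the same length count as above: the summands $w\succ a$ and $a\succ w$ contribute only terms of length $\ge 2$, so the unique length-one survivor is $\pi(w)\bullet a$. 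Iterating this identity from the right over the factors of $\iota(x^\nu)$ yields
\[
    \pi(\iota(x^\nu))=[\letter 1]\bullet\dotsm\bullet[\letter d]=[\letter 1^{\nu_1}\dotsm\letter d^{\nu_d}],
\]
whence $\pi(\Lambda_P([\letter j]))=\sum_\nu p_{j;\nu}[\letter 1^{\nu_1}\dotsm\letter d^{\nu_d}]=p_\diamond(\letter j)$ by \eqref{eq:nua}.

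The main obstacle is exactly this interplay between $\pi$ and the quasi-shuffle product: $\pi$ is not a $\star$-morphism, and the content of the argument is that the length-one component of an iterated $\star$-product of single letters collapses onto their full $\bullet$-contraction. Once the auxiliary identity $\pi(w\star a)=\pi(w)\bullet a$ is secured, the remaining steps are purely formal, and combining the generator computation with the $\bullet$-morphism property established above gives $p_\diamond=\pi\circ\Lambda_P$ on all of $S(B)$.
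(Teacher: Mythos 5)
The paper states this proposition without proof, so there is nothing to compare against; your argument supplies the missing details and is correct. The structure is the natural one: both \(p_\diamond\) and \(\pi\circ\Lambda_P\) are morphisms for the \(\bullet\) product on \(S(B)\) (the latter because \(\Lambda_P\) preserves \(\bullet\) as a quasi-shuffle morphism and because \(\pi(u\bullet v)=\pi(u)\bullet\pi(v)\), which your length count correctly establishes, since \(u_k\bullet v_l=(u'\star v')[ab]\) only produces words of length at least \(\max(k,l)\)), and they agree on the generators \([\letter 1],\dotsc,[\letter e]\). The one step that genuinely needs an argument is the auxiliary identity \(\pi(w\star a)=\pi(w)\bullet a\) for \(a\in S(A)\): your verification that \(w\succ a=wa\) and \(a\succ w=(a\star w')b\) contribute no length-one terms (using \(P(0)=0\) so that every monomial has \(\lvert\nu\rvert\ge 1\)) is exactly right, and iterating it gives \(\pi(\iota(x^\nu))=[\letter 1^{\nu_1}\dotsm\letter d^{\nu_d}]\), matching \(p_\diamond(\letter j)\) term by term. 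No gaps.
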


\bibliographystyle{../tropical-iss/arxivalpha}
\bibliography{cutoff}

\printindex[general]

\end{document}